\newtheorem{theorem}{Theorem}[section]
\newtheorem{lemma}[theorem]{Lemma}
\newtheorem{corollary}[theorem]{Corollary}
\newtheorem{definition}[theorem]{Definition}
\newtheorem{question}[theorem]{Question}
\numberwithin{equation}{section}
\numberwithin{equation}{section}
\begin{document}

\title[Some characterizations of ideal variants of Hurewicz type covering...]{Some characterizations of ideal variants of Hurewicz type covering properties}

\author{Manoj Bhardwaj$^{1}$}

\address{$^{1}$Department of Mathematics, University of Delhi, New Delhi-110007, India}
\email{manojmnj27@gmail.com}
\thanks{The first author acknowledges the fellowship grant of University Grant Commission, India.}

\author{B. K. Tyagi$^{2}$}
\address{$^{2}$Department of Mathematics, Atmaram Sanatan Dharma College, University of Delhi, New Delhi-110021, India}
\email{brijkishore.tyagi@gmail.com}

\subjclass[2010]{54D20, 54B20}


\dedicatory{}


\keywords{Hurewicz space, $\mathcal{I}$-Hurewicz property, strongly star-$\mathcal{I}$-Hurewicz,  star-$\mathcal{I}$-Hurewicz property}

\begin{abstract}
In this paper, we continue to investigate topological properties of $\mathcal{I}H$ and its two star versions namely $SS \mathcal{I} H$ and $S \mathcal{I} H$. We characterized $\mathcal{I}$-Hurewicz property by $\mathcal{I}$-Hurewicz Basis property and $\mathcal{I}$-Hurewicz measure zero property for metrizable spaces. We also characterized $\mathcal{I}$-Hurewicz property, star-$\mathcal{I}$-Hurewicz property and strongly star-$\mathcal{I}$-Hurewicz property using selection principles.
\end{abstract}

\maketitle

\section{Introduction}\label{sec1}

The study of topological properties via various changes is not a new idea in topological spaces. The study of selection principles in topology and their relations to game theory and Ramsey theory was started by Scheepers \cite{H1} (see also \cite{H2}). In the last two decades it has gained the enough importance to become one of the most active areas of set theoretic topology. So the study of covering properties (more precisely, of
selection principles) became one of the more active and prevailing areas for research in General Topology. In covering properties, Hurewicz property is one of the most important properties. A number of the results in the literature show that many topological properties can be described and characterized in terms of star covering properties (see \cite{H21,H22,H23,H24}). The method of stars has been used to study the problem of metrization of topological spaces, and for definitions of several important classical topological notions. We here study such a method in investigation of selection principles for topological spaces. 

In 1925, Hurewicz \cite{H4} (see also \cite{H5}) introduced Hurewicz property in topological spaces and studied it. This property is stronger than Lindel$\ddot{o}$f and weaker than $\sigma$-compactness. In 2001, Ko$\check{c}$inac \cite{H7}(see also \cite{H8}) introduced weakly Hurewicz property as a generalization of Hurewicz spaces. Every Hurewicz space is weakly Hurewicz. In 2004, the authors Bonanzinga, Cammaroto, Ko$\check{c}$inac \cite{H13} introduced the star version of Hurewicz property and also introduced relativization of strongly star-Hurewicz property. Continuing this, in 2013, the authors Song and Li \cite{H9} introduced and studied almost Hurewicz property in topological spaces. In 2016, Ko$\check{c}$inac \cite{H6} introduced and studied the notion of mildly Hurewicz property. In 2018, Das et al. \cite{H16} introduced strongly star-$\mathcal{I}$-Hurewicz and star-$\mathcal{I}$-Hurewicz properties in topological spaces. Here we studied the strongly star-$\mathcal{I}$-Hurewicz and star-$\mathcal{I}$-Hurewicz properties and provided some examples to make a complete study of these properties.

This paper is organized as follows. In section-2, the definitions of the terms used in this paper are provided. In section-3, $\mathcal{I}$-Hurewicz property is characterized using $\mathcal{I}$-Hurewicz basis property and $\mathcal{I}$-Hurewicz measure zero property. In section-4, the concept of strongly star-$\mathcal{I}$-Hurewicz property is characterized. In section-5, the star-$\mathcal{I}$-Hurewicz property is studied.

\section{Preliminaries}\label{sec2}

Let $(X,\tau)$ or $X$ be a topological space. We will denote by $Cl(A)$ and $Int(A)$ the closure of $A$ and the interior of $A$, for a subset $A$ of $X$, respectively. The cardinality of a set $A$ is denoted by $|A|$. Let $\omega$ be the first infinite cardinal and $\omega_1$ the first uncountable cardinal. As usual, a cardinal is the initial ordinal and an ordinal is the set of smaller ordinals. Every cardinal is often viewed as a space with the usual order topology. For the terms and symbols that we do not define follow \cite{H10}. The basic definitions are given.

A nonempty collection $\mathcal{I}$ of subsets of $X$ is called an ideal in $X$ if it has the following properties:
(i) If $A \in \mathcal{I}$ and $B \subseteq A$, then $B \in \mathcal{I}$ (hereditary)
(ii) If $A \in \mathcal{I}$ and $B \in \mathcal{I}$, then $A \cup B \in \mathcal{I}$ ( finite additivity).
According to Rose and Hamlet \cite{DT} $(X,\tau,I)$ denotes a set $X$ with a topology $\tau$ and an ideal $\mathcal{I}$ on $X$. For a subset $A \subseteq X, A^\star(\mathcal{I}) = \{x \in X : U \cap A \notin \mathcal{I},$ for all open sets $U$ containing $x \}$ is called the local function of $A$ with respect to $\mathcal{I}$ and $\tau$ \cite{J}. In 1999 J. Dontchev et a1.\cite{J} called a subset $A$ of a space $(X,\tau,\mathcal{I})$ to be $\mathcal{I}$-dense if every point of $X$ is in the local function of $A$ with respect $\mathcal{I}$ and $\tau$, that is, if $A^\star(\mathcal{I}) = X$. An ideal $\mathcal{I}$ is codense if $\mathcal{I} \cap \tau = \{\emptyset\}$. We denote the ideal of all nowhere dense sets by $\mathcal{I}_{ND}$ and ideal of all finite(countable) sets by $\mathcal{I}_{fin}(\mathcal{I}_{c})$. An ideal is $\sigma$-ideal if it is closed under countable additivity, that is if $A_n \in \mathcal{I}$, then $\cup A_n \in \mathcal{I}$. For more details of ideals follow \cite{JH} and \cite{K}.

Throughout the paper $\mathcal{I}$ denotes a proper admissible ideal of subsets of natural numbers.

Here, as usual, for a subset $A$ of a space $X$ and a collection $\mathcal{P}$ of subsets of
$X$, $St(A, \mathcal{P})$ denotes the star of $A$ with respect to $\mathcal{P}$, that is the set $\bigcup \{ P \in \mathcal{P} : A \cap P \neq \emptyset\}$; for $A = \{x\}$, $x \in X$, we write $St(x,\mathcal{P})$ instead of $St(\{x\}, \mathcal{P})$.

Let $\mathcal{A}$ and $\mathcal{B}$ be collections of open covers of a topological space $X$.

The symbol $S_1(\mathcal{A}, \mathcal{B})$ denotes the selection hypothesis that for each sequence $<\mathcal{U}_n : n \in \omega>$ of elements of $\mathcal{A}$ there exists a sequence $<U_n : n \in \omega>$ such that for each $n$, $U_n \in \mathcal{U}_n$ and $\{U_n : n \in \omega\} \in \mathcal{B}$ \cite{H1}.

The symbol $S_{fin}(\mathcal{A}, \mathcal{B})$ denotes the selection hypothesis that for each sequence $<\mathcal{U}_n : n \in \omega>$ of elements of $\mathcal{A}$ there exists a sequence $<\mathcal{V}_n : n \in \omega>$ such that for each $n$, $\mathcal{V}_n$ is a finite subset of $\mathcal{U}_n$ and $\bigcup_{n \in \omega} \mathcal{V}_n$ is an element of $\mathcal{B}$ \cite{H1}.

In \cite{H25}, Ko$\check{c}$inac introduced star selection principles in the following way.

The symbol $S^\star_1(\mathcal{A}, \mathcal{B})$ denotes the selection hypothesis that for each sequence $<\mathcal{U}_n : n \in \omega>$ of elements of $\mathcal{A}$ there exists a sequence $<U_n : n \in \omega>$ such that for each $n$, $U_n \in \mathcal{U}_n$ and $\{St(U_n, \mathcal{U}_n) : n \in \omega\} \in \mathcal{B}$.

The symbol $S^\star_{fin}(\mathcal{A}, \mathcal{B})$ denotes the selection hypothesis that for each sequence $<\mathcal{U}_n : n \in \omega>$ of elements of $\mathcal{A}$ there exists a sequence $<\mathcal{V}_n : n \in \omega>$ such that for each $n$, $\mathcal{V}_n$ is a finite subset of $\mathcal{U}_n$ and $\bigcup_{n \in \omega} \{St(V, \mathcal{U}_n) : V \in \mathcal{V}_n\}$ is an element of $\mathcal{B}$

The symbol $U^\star_{fin}(\mathcal{A}, \mathcal{B})$ denotes the selection hypothesis that for each sequence $<\mathcal{U}_n : n \in \omega>$ of elements of $\mathcal{A}$ there exists a sequence $<\mathcal{V}_n : n \in \omega>$ such that for each $n$, $\mathcal{V}_n$ is a finite subset of $\mathcal{U}_n$ and $\{St(\bigcup \mathcal{V}_n, \mathcal{U}_n) : n \in \omega\} \in \mathcal{B}$ or there is some $n$ such that $St(\bigcup \mathcal{V}_n, \mathcal{U}_n) = X$.

Let $\mathcal{K}$ be a family of subsets of $X$. Then we say that $X$ belongs to the class
$SS^\star_\mathcal{K}(\mathcal{A}, \mathcal{B})$ if $X$ satisfies the following selection hypothesis that for every sequence $<\mathcal{U}_n : n \in \omega>$ of elements of $\mathcal{A}$ there exists a sequence $<K_n : n \in \omega>$ of elements of $K$ such that $\{St(K_n,\mathcal{U}_n) : n \in \omega\} \in \mathcal{B}$.

When $\mathcal{K}$ is the collection of all one-point [resp., finite, compact] subspaces of
$X$ we write $SS^\star_1(\mathcal{A}, \mathcal{B})$ [resp., $SS^\star_{fin}(\mathcal{A}, \mathcal{B})$, $SS^\star_{comp}(\mathcal{A}, \mathcal{B})$] instead of $SS^\star_\mathcal{K}(\mathcal{A}, \mathcal{B})$.

In this paper $\mathcal{A}$ and $\mathcal{B}$ will be collections of the following open covers of a space $X$:

$\mathcal{O}$ : the collection of all open covers of $X$.

$\Omega$ : the collection of $\omega$-covers of $X$. An open cover $\mathcal{U}$ of $X$ is an $\omega$-cover \cite{H27} if $X$ does not belong to $\mathcal{U}$ and every finite subset of $X$ is contained in an element of $\mathcal{U}$.

$\Gamma$ : the collection of $\gamma$-covers of $X$. An open cover $\mathcal{U}$ of $X$ is a $\gamma$-cover \cite{H27} if it is infinite and each $x \in X$ belongs to all but finitely many elements of $\mathcal{U}$.

$\mathcal{O}^{gp}$ : the collection of groupable open covers. An open cover $\mathcal{U}$ of $X$ is groupable \cite{H28} if it can be expressed as a countable union of finite, pairwise
disjoint subfamilies $\mathcal{U}_n$, such that each $x \in X$ belongs to $\bigcup \mathcal{U}_n$ for all but finitely many $n$.

$\mathcal{I}-\mathcal{O}^{gp}$ denotes the collection of all $\mathcal{I}$-groupable open covers of $X$; a open cover $\mathcal{U}$ of $X$ is $\mathcal{I}$-groupable \cite{H39} if it can be represented in the form $\mathcal{U} = \bigcup_{n \in \omega} \mathcal{U}_n$, where $\mathcal{U}_n$'s are finite, pairwise disjoint and each $x \in X$, $\{n \in \omega : x \notin \bigcup \mathcal{U}_n\} \in \mathcal{I}$.

A nonempty collection $\mathcal{I}$ of subsets of $X$ is called an ideal in $X$ if it has the following properties:
(i) If $A \in \mathcal{I}$ and $B \subseteq A$, then $B \in \mathcal{I}$ (hereditary)
(ii) If $A \in \mathcal{I}$ and $B \in \mathcal{I}$, then $A \cup B \in \mathcal{I}$ ( finite additivity).
According to Rose and Hamlet \cite{DT} $(X,\tau,I)$ denotes a set $X$ with a topology $\tau$ and an ideal $\mathcal{I}$ on $X$. For a subset $A \subseteq X, A^\star(\mathcal{I}) = \{x \in X : U \cap A \notin \mathcal{I},$ for all open sets $U$ containing $x \}$ is called the local function of $A$ with respect to $\mathcal{I}$ and $\tau$ \cite{J}. In 1999 J. Dontchev et a1.\cite{J} called a subset $A$ of a space $(X,\tau,\mathcal{I})$ to be $\mathcal{I}$-dense if every point of $X$ is in the local function of $A$ with respect $\mathcal{I}$ and $\tau$, that is, if $A^\star(\mathcal{I}) = X$. An ideal $\mathcal{I}$ is codense if $\mathcal{I} \cap \tau = \{\emptyset\}$. We denote the ideal of all nowhere dense sets by $\mathcal{I}_{ND}$ and ideal of all finite(countable) sets by $\mathcal{I}_{fin}(\mathcal{I}_{c})$. An ideal is $\sigma$-ideal if it is closed under countable additivity, that is if $A_n \in \mathcal{I}$, then $\cup A_n \in \mathcal{I}$. For more details of ideals follow \cite{JH} and \cite{K}.

Throughout the paper $\mathcal{I}$ denotes the proper admissible ideal of subsets of natural numbers.

\begin{definition} \label{2.1} \cite{H4}
A space $X$ is said to have \textit{Hurewicz property} (in short $H$) if for each sequence $<\mathcal{U}_n : n \in \omega>$ of open covers of $X$ there is a sequence $<\mathcal{V}_n : n \in \omega>$ such that for each $n$, $\mathcal{V}_n$ is a finite subset of $\mathcal{U}_n$ and each $x \in X$ belongs to $\bigcup \mathcal{V}_n$ for all but finitely many $n$.
\end{definition}

\begin{definition} \label{2.6} \cite{H13}
A space $X$ is said to have \textit{star-Hurewicz property} (in short $SH$) if for each sequence $<\mathcal{U}_n : n \in \omega>$ of open covers of $X$ there is a sequence $<\mathcal{V}_n : n \in \omega>$ such that for each $n$, $\mathcal{V}_n$ is a finite subset of $\mathcal{U}_n$ and each $x \in X$ belongs to $St(\bigcup \mathcal{V}_n, \mathcal{U}_n)$ for all but finitely many $n$.
\end{definition}

\begin{definition} \label{2.7} \cite{H13}
A space $X$ is said to have \textit{strongly star-Hurewicz property} (in short $SSH$) if for each sequence $<\mathcal{U}_n : n \in \omega>$ of open covers of $X$ there is a sequence $<A_n : n \in \omega>$ of finite subsets of $X$ such that each $x \in X$ belongs to $St(A_n, \mathcal{U}_n)$ for all but finitely many $n$.
\end{definition}

Let $\mathcal{I}$ be an admissible ideal in $\omega$. Then we have the following two definitions.

\begin{definition} \label{} \cite{H40}
A space $X$ is said to have \textit{$\mathcal{I}$-Hurewicz property} (in short $\mathcal{I}H$) if for each sequence $<\mathcal{U}_n : n \in \omega>$ of open covers of $X$ there is a sequence $<\mathcal{V}_n : n \in \omega>$ such that for each $n$, $\mathcal{V}_n$ is a finite subset of $\mathcal{U}_n$ and for each $x \in X$, $\{n \in \omega : x \notin \bigcup \mathcal{V}_n\} \in \mathcal{I}$.
\end{definition}

\begin{definition} \label{} \cite{H16}
A space $X$ is said to have \textit{star-$\mathcal{I}$-Hurewicz property} (in short $S\mathcal{I}H$) if for each sequence $<\mathcal{U}_n : n \in \omega>$ of open covers of $X$ there is a sequence $<\mathcal{V}_n : n \in \omega>$ such that for each $n$, $\mathcal{V}_n$ is a finite subset of $\mathcal{U}_n$ and for each $x \in X$, $\{n \in \omega : x \notin St(\bigcup \mathcal{V}_n, \mathcal{U}_n)\} \in \mathcal{I}$.
\end{definition}

\begin{definition} \label{} \cite{H16}
A space $X$ is said to have \textit{strongly star-$\mathcal{I}$-Hurewicz property} (in short $SS\mathcal{I}H$) if for each sequence $<\mathcal{U}_n : n \in \omega>$ of open covers of $X$ there is a sequence $<V_n : n \in \omega>$ of finite subsets of $X$ and for each $x \in X$, $\{n \in \omega : x \notin St(V_n, \mathcal{U}_n)\} \in \mathcal{I}$.
\end{definition}

\begin{definition}   \cite{H24,H22}
A topological space $X$ is strongly starcompact if for every open cover $\mathcal{U}$ of $X$ there exists a finite subset $A$ of $X$ such that $X = St(A, \mathcal{U})$
\end{definition}
Call a space $\sigma$-strongly starcompact if it is a union of countably many strongly starcompact spaces.

\begin{definition} \label{2.5} \cite{H10}
A function $f$ from a topological space $X$ to a space $Y$ is said to be perfect map if 
\begin{enumerate}
\item
$f$ is onto
\item
$f$ is continuous
\item
$f$ is closed map
\item
$f^{-1}(y)$ is compact in $X$ for each $y \in Y$.
\end{enumerate}
\end{definition}

\section{Properties of $\mathcal{I}H$ spaces}

The symbol $\mathcal{I}H(X)$ denotes the following $\mathcal{I}$-Hurewicz game \cite{H39} on $X$:
players ONE and TWO play a round for each $n \in \omega$. In the $n$th round
player ONE chooses a open cover $\mathcal{U}_n$ for $X$ and then TWO chooses a finite
set $\mathcal{V}_n \subseteq \mathcal{U}_n$. TWO wins a play $\mathcal{U}_1; \mathcal{V}_1; \mathcal{U}_2; \mathcal{V}_2...$ if each $x \in X$, $\{n \in \omega : x \notin \bigcup \mathcal{V}_n\} \in \mathcal{I}$; otherwise ONE wins.

\begin{lemma} \label{a} \cite{H16}
An $\mathcal{I}H$ space is Lindel$\ddot{o}$f.
\end{lemma}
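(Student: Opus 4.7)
The plan is to verify Lindelöfness directly by feeding an arbitrary open cover into the $\mathcal{I}H$ selection hypothesis as a constant sequence, and then extracting a countable subcover from the resulting finite subfamilies. This is the standard trick used to derive Lindelöfness from selective covering properties.

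Concretely, let $\mathcal{U}$ be an arbitrary open cover of $X$, and apply the $\mathcal{I}H$ property to the constant sequence $\langle \mathcal{U}_n : n \in \omega \rangle$ defined by $\mathcal{U}_n = \mathcal{U}$ for all $n$. This yields a sequence $\langle \mathcal{V}_n : n \in \omega \rangle$ of finite subsets $\mathcal{V}_n \subseteq \mathcal{U}$ such that for every $x \in X$ the set $E_x = \{n \in \omega : x \notin \bigcup \mathcal{V}_n\}$ lies in $\mathcal{I}$. Set $\mathcal{V} = \bigcup_{n \in \omega} \mathcal{V}_n$; clearly $\mathcal{V}$ is a countable subfamily of $\mathcal{U}$.

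The remaining step is to observe that $\mathcal{V}$ covers $X$. Given $x \in X$, the set $E_x$ belongs to $\mathcal{I}$, and since $\mathcal{I}$ is a \emph{proper} ideal on $\omega$ we have $\omega \notin \mathcal{I}$, so $E_x \subsetneq \omega$. Hence there exists $n \in \omega$ with $x \in \bigcup \mathcal{V}_n \subseteq \bigcup \mathcal{V}$. Thus $\mathcal{V}$ is a countable subcover of $\mathcal{U}$, establishing that $X$ is Lindel\"of.

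There is essentially no obstacle here; the only subtlety is the appeal to the properness of $\mathcal{I}$, without which the conclusion would fail (if $\omega \in \mathcal{I}$ the selection hypothesis becomes vacuous). Admissibility of $\mathcal{I}$ is not needed for this particular implication, only properness.
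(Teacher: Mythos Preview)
Your proof is correct and is exactly the standard argument for this fact. The paper does not supply its own proof of this lemma; it simply cites the result from \cite{H16}, so there is nothing further to compare against.
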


\textbf{Characterizations of $\mathcal{I}$-Hurewicz property}

From Lemma \ref{a}, it is clear that spaces $X$ having the $\mathcal{I}$-Hurewicz property satisfy: each open cover of $X$ has a countable subcover, that is, each $\mathcal{I}$-Hurewicz space is Lindel$\ddot{o}$f. Therefore, when we work with the $\mathcal{I}$-Hurewicz property, we may assume that all open covers of a space are countable. Note also that a space $X$ has the $\mathcal{I}$-Hurewicz property whenever ONE does not have the winning strategy in the game $\mathcal{I}H(X)$.

Notice that any $\mathcal{U} \in \Omega$ satisfies:
For each $k$ and each partition $\mathcal{U} = \mathcal{U}_1 \cup ... \cup \mathcal{U}_k$ there is an $i \leq k$ with $\mathcal{U}_i \in \Omega$.

A space $X$ is called $\omega$-Lindel$\ddot{o}$f if each cover in $\Omega$ has a countable subcover.

In next theorem, we talk about countable covers.

\begin{theorem}
For a Lindel$\ddot{o}$f space $X$ the following statements are equivalent:
\begin{enumerate}
\item
X satisfies $S_{fin}(\mathcal{O},\mathcal{O})$ and $\mathcal{O} = \mathcal{I}-\mathcal{O}^{gp}$;
\item
X satisfies $S_{fin}(\mathcal{O}, \mathcal{I}-\mathcal{O}^{gp})$;
\item
ONE has no winning strategy in the game $G_{fin}(\mathcal{O}, \mathcal{I}-\mathcal{O}^{gp})$;
\end{enumerate}
\end{theorem}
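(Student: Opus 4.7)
My plan is to prove $(1) \Leftrightarrow (2)$ and $(2) \Leftrightarrow (3)$, handling $(2) \Rightarrow (3)$ last as the only nontrivial step.

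For $(1) \Rightarrow (2)$: given $\langle \mathcal{U}_n : n \in \omega \rangle$ in $\mathcal{O}$, I would apply $S_{fin}(\mathcal{O},\mathcal{O})$ to extract finite $\mathcal{V}_n \subseteq \mathcal{U}_n$ whose union $\mathcal{W}$ is an open cover; the hypothesis $\mathcal{O} = \mathcal{I}-\mathcal{O}^{gp}$ then automatically promotes $\mathcal{W}$ to an $\mathcal{I}$-groupable cover, proving $(2)$. For $(2) \Rightarrow (1)$: the principle $S_{fin}(\mathcal{O},\mathcal{O})$ follows from the trivial inclusion $\mathcal{I}-\mathcal{O}^{gp} \subseteq \mathcal{O}$. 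To prove $\mathcal{O} \subseteq \mathcal{I}-\mathcal{O}^{gp}$, fix $\mathcal{U} \in \mathcal{O}$; Lindel\"ofness reduces us to a countable enumeration $\mathcal{U} = \{U_n : n \in \omega\}$. I would then apply $(2)$ to the constant sequence $\mathcal{U}_n = \mathcal{U}$, obtaining a sub-family $\mathcal{V} \subseteq \mathcal{U}$ admitting an $\mathcal{I}$-groupable partition $\mathcal{V} = \bigsqcup_k \mathcal{W}_k$. Finally I would distribute the (at most countably many) leftovers from $\mathcal{U} \setminus \mathcal{V}$ one per group into the $\mathcal{W}_k$'s (or dump them all into $\mathcal{W}_0$ if only finitely many remain): each group remains finite and pairwise disjoint, each $\bigcup \mathcal{W}_k$ only enlarges, and so the $\mathcal{I}$-groupability transfers from $\mathcal{V}$ to the full cover $\mathcal{U}$.

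For $(3) \Rightarrow (2)$, I would use the standard ``oblivious strategy'' argument: if $(2)$ fails with witness $\langle \mathcal{U}_n \rangle$, then the strategy for ONE that plays $\mathcal{U}_n$ at round $n$, regardless of TWO's moves, is winning, contradicting $(3)$.

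The main obstacle is $(2) \Rightarrow (3)$, a Pawlikowski-type promotion of a selection principle to a game-theoretic statement. I would argue by contradiction from a supposed winning strategy $\sigma$ for ONE. The approach is to build the tree $T$ of all legal $\sigma$-plays, index its nodes by $s \in \omega^{<\omega}$, and attach to each node $s$ the open cover $\mathcal{U}_s$ prescribed by $\sigma$ after TWO's partial play coded by $s$. Enumerating the nodes into a single sequence and applying $(2)$ to $\langle \mathcal{U}_{s_n} : n \in \omega\rangle$ yields finite $\mathcal{V}_n \subseteq \mathcal{U}_{s_n}$ whose union lies in $\mathcal{I}-\mathcal{O}^{gp}$. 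A bookkeeping argument then threads these selections into a single branch of $T$ that is a legal $\sigma$-play won by TWO, the required contradiction. The delicate point is that the grouping along the chosen branch must inherit the $\mathcal{I}$-groupability of the globally selected $\bigcup_n \mathcal{V}_n$; this uses the admissibility of $\mathcal{I}$ (so that finite alterations of $\mathcal{I}$-sets remain in $\mathcal{I}$) together with the closure of $\mathcal{I}-\mathcal{O}^{gp}$ under regrouping by finite shuffles, which lets one realign the levels produced by $(2)$ with the rounds of the actual play.
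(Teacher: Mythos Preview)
Your arguments for $(1)\Leftrightarrow(2)$ and $(3)\Rightarrow(2)$ are correct and in fact more careful than the paper's (which simply says ``easy and thus omitted'' for $(1)\Leftrightarrow(2)$; your leftover-distribution step for $(2)\Rightarrow(1)$ genuinely fills in something the paper glosses over).

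The real divergence is in the hard direction. The paper does \emph{not} attempt $(2)\Rightarrow(3)$ directly; instead it proves $(1)\Rightarrow(3)$, which is far easier once $(1)\Leftrightarrow(2)$ is in hand. The trick is that any strategy $\sigma$ for ONE in $G_{fin}(\mathcal{O},\mathcal{I}\text{-}\mathcal{O}^{gp})$ is verbatim a strategy in $G_{fin}(\mathcal{O},\mathcal{O})$; by Hurewicz's classical theorem, $S_{fin}(\mathcal{O},\mathcal{O})$ (which is half of $(1)$) already guarantees $\sigma$ is not winning in $G_{fin}(\mathcal{O},\mathcal{O})$, so some $\sigma$-play leaves TWO with an open cover; the other half of $(1)$, namely $\mathcal{O}=\mathcal{I}\text{-}\mathcal{O}^{gp}$, then upgrades that cover to an $\mathcal{I}$-groupable one for free. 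No tree-threading is needed.

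By contrast, your Pawlikowski-style $(2)\Rightarrow(3)$ has a real gap at exactly the spot you flag as ``delicate.'' Applying $(2)$ to the enumeration of all strategy-tree nodes gives a single $\mathcal{I}$-groupable partition of $\bigcup_n\mathcal{V}_n$, but that partition need not respect the tree structure at all: the finite blocks of the $\mathcal{I}$-groupable decomposition may scatter across incomparable nodes, and there is no reason the restriction to any one branch inherits $\mathcal{I}$-groupability. Admissibility of $\mathcal{I}$ handles only finite perturbations, and ``closure under finite shuffles'' does not address the problem that a branch sees only a (possibly very sparse) subsequence of the global blocks. Making this work would essentially require re-proving the Hurewicz game theorem inside your argument. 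The paper's route, invoking that theorem as a black box via $(1)$, sidesteps the issue entirely.
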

\begin{proof}
(1) $\Leftrightarrow$ (2).
The proof is easy and thus omitted.

(1) $\Rightarrow$ (3).
Let $\sigma$ be a strategy for ONE in the game $G_{fin}(\mathcal{O}, \mathcal{I}-\mathcal{O}^{gp})$. Then it is also a strategy for ONE in $G_{fin}(\mathcal{O}, \mathcal{O})$. By the given condition $X$ has the property $S_{fin}(\mathcal{O},\mathcal{O})$ and so by [Theorem 10,\cite{H4}] this is not a winning strategy for ONE in $G_{fin}(\mathcal{O}, \mathcal{O})$. For a $\sigma$-play $\sigma(\emptyset) = \mathcal{U}_1, \mathcal{V}_1; \sigma(\mathcal{V}_1) = \mathcal{U}_2, \mathcal{V}_2;$ ... lost by ONE, TWO’s moves constitute an open cover of $X$. Again by the given condition $\mathcal{O} = \mathcal{I}-\mathcal{O}^{gp}$, this open cover is $\mathcal{I}$-groupable. Hence this play is actually lost by ONE in the game $G_{fin}(\mathcal{O}, \mathcal{I}-\mathcal{O}^{gp})$.

(3) $\Rightarrow$ (1).
Let $<\mathcal{U}_n : n \in \omega>$ be a sequence of open covers of $X$. Since ONE has no winning strategy in the game $G_{fin}(\mathcal{O}, \mathcal{I}-\mathcal{O}^{gp})$,
there is a $\sigma$-play $\sigma(\emptyset) = \mathcal{U}_1, \mathcal{V}_1; \sigma(\mathcal{V}_1) = \mathcal{U}_2, \mathcal{V}_2;$ ... lost by ONE, TWO’s moves constitute a $\mathcal{I}$-groupable open cover of $X$. Then $S_{fin}(\mathcal{O},\mathcal{O})$ holds. Observe that $\mathcal{O} \supseteq \mathcal{I}-\mathcal{O}^{gp}$. For the rest, let $\mathcal{U}$ be any open cover of $X$. Now define $\mathcal{U}_n = \mathcal{U}$ for each $n$. Then $<\mathcal{U}_n : n \in \omega>$ be a sequence of open covers of $X$. Since ONE has no winning strategy in the game $G_{fin}(\mathcal{O}, \mathcal{I}-\mathcal{O}^{gp})$, there is a $\sigma$-play $\sigma(\emptyset) = \mathcal{U}_1, \mathcal{V}_1; \sigma(\mathcal{V}_1) = \mathcal{U}_2, \mathcal{V}_2;$ ... lost by ONE, TWO’s moves constitute a $\mathcal{I}$-groupable open cover of $X$. Hence each open cover contains an $\mathcal{I}$-groupable open subcover.
\end{proof}

Now we try to characterize $\mathcal{I}H$ property using ideals in certain types of open covers. For it, we need the following definition.

\begin{definition}  \cite{H1}
Let $\mathcal{A}$ and $\mathcal{B}$ be families of subsets of the infinite set $S$. Then $CDR_{sub}(\mathcal{A},\mathcal{B})$ denotes the statement that for each sequence $<A_n : n \in \omega>$ of elements of $\mathcal{A}$ there is a sequence $<B_n : n \in \omega>$ such that for each $n$, $B_n \subseteq A_n$, for $m \neq n$, $B_m \cap B_n = \emptyset$, and each $B_n$ is a member of $\mathcal{B}$.
\end{definition}

\begin{theorem}
If a space $X$ has $\mathcal{I}H$ property and $CDR_{sub}(\mathcal{O},\mathcal{O})$ holds for $X$, then $S_{fin}(\mathcal{O}, \mathcal{I}-\mathcal{O}^{gp})$ holds.
\end{theorem}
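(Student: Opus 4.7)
The plan is to use $CDR_{sub}(\mathcal{O},\mathcal{O})$ to first refine the given sequence of open covers into a pairwise disjoint sequence of open covers, and then apply the $\mathcal{I}H$ property to that refined sequence. The pairwise disjointness is what upgrades the conclusion of the $\mathcal{I}H$ property into full $\mathcal{I}$-groupability.

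More concretely, suppose $\langle \mathcal{U}_n : n \in \omega \rangle$ is a sequence of open covers of $X$. First I would invoke $CDR_{sub}(\mathcal{O},\mathcal{O})$ on this sequence to produce a sequence $\langle \mathcal{W}_n : n \in \omega \rangle$ such that each $\mathcal{W}_n \subseteq \mathcal{U}_n$ is an open cover of $X$ and the family $\{\mathcal{W}_n : n \in \omega\}$ is pairwise disjoint. Next, since $X$ satisfies $\mathcal{I}H$, I would apply it to the sequence $\langle \mathcal{W}_n : n \in \omega \rangle$ to obtain, for each $n$, a finite subfamily $\mathcal{V}_n \subseteq \mathcal{W}_n$ such that for every $x \in X$,
\[
\{n \in \omega : x \notin \bigcup \mathcal{V}_n\} \in \mathcal{I}.
\]
Note that $\mathcal{V}_n \subseteq \mathcal{W}_n \subseteq \mathcal{U}_n$, so each $\mathcal{V}_n$ is a finite subset of $\mathcal{U}_n$, as required by $S_{fin}$.

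It remains to check that $\mathcal{V} := \bigcup_{n \in \omega} \mathcal{V}_n$ is an $\mathcal{I}$-groupable open cover of $X$. Because $\mathcal{I}$ is a proper ideal, $\omega \notin \mathcal{I}$, so for every $x \in X$ the set $\{n : x \in \bigcup \mathcal{V}_n\}$ is non-empty, hence $\mathcal{V}$ covers $X$. The pairwise disjointness of the $\mathcal{W}_n$ passes to the $\mathcal{V}_n$, so $\mathcal{V} = \bigcup_{n \in \omega} \mathcal{V}_n$ is already presented as a countable union of finite, pairwise disjoint subfamilies; together with the ideal condition produced by $\mathcal{I}H$, this is exactly the definition of $\mathcal{I}$-groupable.

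I do not expect a significant obstacle: the construction is a direct two-step application of the two hypotheses, and the main observation is just that disjointness of the original refinement is what lets the partition witness required by $\mathcal{I}$-groupability be read off the $\mathcal{V}_n$'s themselves. The only mildly delicate point is verifying that $\mathcal{V}$ genuinely covers $X$ (ruling out the degenerate possibility that some $x$ lies in no $\bigcup \mathcal{V}_n$), and this is handled simply by properness of $\mathcal{I}$.
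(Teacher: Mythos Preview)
Your proposal is correct and follows essentially the same approach as the paper: use $CDR_{sub}(\mathcal{O},\mathcal{O})$ to pass to pairwise disjoint subcovers, apply the $\mathcal{I}H$ property to those, and observe that the resulting finite selections are automatically pairwise disjoint and hence witness $\mathcal{I}$-groupability. You are in fact more careful than the paper in noting explicitly that $\mathcal{V}_n \subseteq \mathcal{U}_n$ and that properness of $\mathcal{I}$ ensures $\bigcup_n \mathcal{V}_n$ actually covers $X$.
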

\begin{proof}
Let $<\mathcal{U}_n : n \in \omega>$ be a sequence of open covers of $X$. Since $X$ has the property $CDR_{sub}(\mathcal{O},\mathcal{O})$ we may assume that $\mathcal{U}_n$'s are pairwise disjoint. Since $X$ has $\mathcal{I}H$ property there is a sequence $<\mathcal{V}_n : n \in \omega>$ such that for each $n$, $\mathcal{V}_n$ is a finite subset of $\mathcal{U}_n$ and for each $x \in X$, $\{n \in \omega : x \notin \bigcup \mathcal{V}_n\} \in \mathcal{I}$. Then $\mathcal{V}_n$'s are pairwise disjoint and hence $\bigcup_{n \in \omega} \mathcal{V}_n$ is an $\mathcal{I}$-groupable cover of $X$.
\end{proof}

If we use stronger condition than $\mathcal{I}H$ property, then we can drop $CDR_{sub}(\mathcal{O},\mathcal{O})$.

\begin{theorem} 
If One does not have a winning strategy in $\mathcal{I}H(X)$ game, then $S_{fin}(\Lambda, \mathcal{I}-\mathcal{O}^{gp})$ holds.
\end{theorem}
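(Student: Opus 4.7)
The plan is to build a strategy $\sigma$ for ONE in the game $\mathcal{I}H(X)$ that, by assumption, must fail on some play; the failure will directly yield the required sequence $\langle \mathcal{V}_n : n \in \omega \rangle$. The central observation is that elements of $\Lambda$ (large covers) have the robust feature that removing any finite subfamily still leaves a cover of $X$, since each point lies in infinitely many members. This will allow ONE's responses to be thinned so that TWO's choices become automatically pairwise disjoint, which is exactly what the definition of $\mathcal{I}$-groupable cover demands.

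Concretely, let $\langle \mathcal{U}_n : n \in \omega \rangle$ be a sequence of large covers of $X$. Define $\sigma$ recursively on finite sequences of finite sets: set $\sigma(\emptyset) = \mathcal{U}_0$, and given TWO's previous responses $\mathcal{V}_0, \ldots, \mathcal{V}_{n-1}$, let
\[
\sigma(\mathcal{V}_0, \ldots, \mathcal{V}_{n-1}) \;=\; \mathcal{U}_n \setminus \bigcup_{i<n} \mathcal{V}_i .
\]
Because $\mathcal{U}_n \in \Lambda$ and $\bigcup_{i<n} \mathcal{V}_i$ is finite, each point of $X$ still belongs to infinitely many members of $\sigma(\mathcal{V}_0, \ldots, \mathcal{V}_{n-1})$, so this is a legitimate open cover of $X$ and $\sigma$ is a bona fide strategy for ONE in $\mathcal{I}H(X)$.

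Since ONE has no winning strategy, $\sigma$ is not winning either, so there is a $\sigma$-play in which TWO wins. Along this play, TWO's responses $\mathcal{V}_n$ are finite subsets of $\sigma(\mathcal{V}_0, \ldots, \mathcal{V}_{n-1}) \subseteq \mathcal{U}_n$, and they are pairwise disjoint by the very construction of $\sigma$. Moreover, TWO winning means that for every $x \in X$,
\[
\bigl\{ n \in \omega : x \notin \bigcup \mathcal{V}_n \bigr\} \in \mathcal{I}.
\]
Hence $\bigcup_{n \in \omega} \mathcal{V}_n$, decomposed as the disjoint union of the finite families $\mathcal{V}_n$, witnesses the defining property of $\mathcal{I}\text{-}\mathcal{O}^{gp}$, completing the verification of $S_{fin}(\Lambda, \mathcal{I}\text{-}\mathcal{O}^{gp})$.

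The only delicate point, and the place where largeness is essential rather than cosmetic, is justifying that $\sigma$'s output is still an open cover after subtraction; with ordinary open covers this step would fail and one would be forced to argue a posteriori that TWO's choices can be made disjoint. Apart from that, the argument is a direct game-to-selection transfer.
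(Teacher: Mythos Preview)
Your proof is correct and follows essentially the same approach as the paper: both define ONE's strategy by playing $\mathcal{U}_n$ with all of TWO's earlier choices removed, use largeness to guarantee the result is still a cover, and read off the pairwise disjoint $\mathcal{I}$-groupable witness from a play lost by ONE. Your write-up is in fact a bit more explicit than the paper's about why largeness is the crucial ingredient.
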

\begin{proof}
Let $X$ be a topological space such that One does not have a winning strategy in $\mathcal{I}H(X)$ game. Let $<\mathcal{U}_n : n \in \omega>$ be a sequence of large covers of $X$. 
Consider the following strategy $\sigma$ of ONE in the $\mathcal{I}$-Hurewicz game on $X$. In the first inning ONE plays $\sigma(\emptyset) = \mathcal{U}_1$. If TWO responds with the finite set $\mathcal{T}_1$, then ONE plays $\sigma(\mathcal{T}_1) = \mathcal{U}_1 \setminus \mathcal{T}_1$. If TWO responds with the finite set $\mathcal{T}_2 \subseteq \sigma(\mathcal{T}_1)$, then ONE plays $\sigma(\mathcal{T}_1, \mathcal{T}_2) = \mathcal{U}_3 \setminus (\mathcal{T}_1 \bigcup \mathcal{T}_2)$ and so on. Since $\sigma$ is not a winning strategy for ONE, consider a $\sigma$ play $\sigma(\emptyset), \mathcal{T}_1, \sigma(\mathcal{T}_1), \mathcal{T}_2, \sigma(\mathcal{T}_1, \mathcal{T}_2), \mathcal{T}_3, ...$ which is lost by ONE. Then the sequence $<\mathcal{T}_n : n \in \omega>$ of moves by TWO are disjoint from each other (by the definition of the strategy $\sigma$) and for each $x \in X$ the set $\{n \in \omega : x \notin \bigcup \mathcal{T}_n\} \in \mathcal{I}$.
\end{proof}

It can be noted that for a space $X$, $S_{fin}(\mathcal{O}, \mathcal{I}-\mathcal{O}^{gp}) \Rightarrow S_{fin}(\Lambda, \mathcal{I}-\mathcal{O}^{gp}) \Rightarrow S_{fin}(\Omega, \mathcal{I}-\mathcal{O}^{gp})$, since each large cover is an open cover and each $\omega$-cover is a large cover and hence $\mathcal{I}$-Hurewicz property with $CDR_{sub}(\mathcal{O},\mathcal{O})$ implies $S_{fin}(\Omega, \mathcal{I}-\mathcal{O}^{gp})$. 

For the converse we consider inverse invariant ideal. An ideal $\mathcal{I}$ of $\omega$ is called inverse invariant \cite{H41} if for an increasing function $f : \omega \rightarrow \omega$ such that $f(n) \geq n$ for all $n \in \omega$, $f(A) \in \mathcal{F}(\mathcal{I})$ implies that $A \in \mathcal{F}(\mathcal{I})$.

\begin{theorem} \label{35}
Let $\mathcal{I}$ be an inverse invariant ideal and $X$ be an $\omega$-Lindel$\ddot{o}$f space. If $X$ satisfies $S_{fin}(\Omega, \mathcal{I}-\mathcal{O}^{gp})$, then $X$ has the $\mathcal{I}$-Hurewicz property provided that $S_1(\mathcal{F}(\mathcal{I}), \mathcal{F}(\mathcal{I}))$ holds.
\end{theorem}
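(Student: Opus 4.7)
The plan is to start with a given sequence of open covers, reduce it to a sequence of countable $\omega$-covers via the standard finite-union trick, apply $S_{fin}(\Omega, \mathcal{I}-\mathcal{O}^{gp})$ to obtain an $\mathcal{I}$-groupable cover, unpack the finite unions to produce the required selections from the original covers, and finally translate the $\mathcal{I}$-groupability condition (which lives in the partition index) to the $\mathcal{I}$-Hurewicz condition (which lives in the stage index) using the hypotheses on $\mathcal{I}$.

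More precisely, let $<\mathcal{U}_n : n \in \omega>$ be a sequence of open covers of $X$. I would form $\mathcal{U}_n^{\ast} = \{\bigcup \mathcal{F} : \mathcal{F} \subseteq \mathcal{U}_n \text{ finite}, \bigcup \mathcal{F} \neq X\}$; if some finite union equals $X$ at stage $n$, that stage is handled trivially. Each $\mathcal{U}_n^{\ast}$ is an $\omega$-cover of $X$, and by $\omega$-Lindel\"ofness may be assumed countable. Applying $S_{fin}(\Omega, \mathcal{I}-\mathcal{O}^{gp})$ yields finite $\mathcal{H}_n \subseteq \mathcal{U}_n^{\ast}$ such that $\bigcup_n \mathcal{H}_n$ is $\mathcal{I}$-groupable, with witnessing partition $\bigcup_n \mathcal{H}_n = \bigcup_k \mathcal{G}_k$ into finite pairwise disjoint pieces satisfying $K(x) := \{k : x \in \bigcup \mathcal{G}_k\} \in \mathcal{F}(\mathcal{I})$ for every $x \in X$. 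Expanding each $H \in \mathcal{H}_n$ as a finite subfamily of $\mathcal{U}_n$ and taking the union over $H \in \mathcal{H}_n$ gives finite $\mathcal{V}_n \subseteq \mathcal{U}_n$ with $\bigcup \mathcal{V}_n = \bigcup \mathcal{H}_n$; it therefore suffices to prove $N(x) := \{n : x \in \bigcup \mathcal{H}_n\} \in \mathcal{F}(\mathcal{I})$.

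The core of the proof is this last step. Setting $N_k = \{n : \mathcal{H}_n \cap \mathcal{G}_k \neq \emptyset\}$ and $M_n = \{k : n \in N_k\}$, both are finite (the $\mathcal{G}_k$'s are disjoint and each $\mathcal{H}_n$ is finite), and $\tau(k) := \max N_k$ tends to infinity because for any fixed $N$ the finite set $\bigcup_{n \leq N} \mathcal{H}_n$ can contain only finitely many of the disjoint $\mathcal{G}_k$'s. Thus $\tau$ admits a non-decreasing majorant $T$ with $T(k) \geq k$. For each $k \in K(x)$ pick a witness $n_k(x) \in N_k \cap N(x)$ (which exists because $x \in \bigcup \mathcal{G}_k$ is witnessed by some $H \in \mathcal{H}_{n_k(x)} \cap \mathcal{G}_k$). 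Then $n_k(x) \leq T(k)$, and one uses $S_1(\mathcal{F}(\mathcal{I}), \mathcal{F}(\mathcal{I}))$ to make these witness choices coherent across the natural filter sets associated to $K(x)$ and the $N_k$'s, after which the inverse invariance of $\mathcal{I}$ (applied to the increasing majorant $T$) lifts the inclusion $\{n_k(x) : k \in K(x)\} \subseteq N(x)$ to $N(x) \in \mathcal{F}(\mathcal{I})$.

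The chief obstacle is precisely this combinatorial reindexing: $\mathcal{I}$-groupability lives on the partition index $k$ while $\mathcal{I}$-Hurewicz demands a filter condition on the original stage index $n$, and a single selection $k \mapsto n_k$ need neither be injective nor bounded by any obvious monotone function of $k$. The two extra hypotheses were added to the statement for exactly this reason: $S_1(\mathcal{F}(\mathcal{I}), \mathcal{F}(\mathcal{I}))$ is what lets one select witnesses so that the resulting image set remains in $\mathcal{F}(\mathcal{I})$, while inverse invariance is what lets one pull filter membership back along the monotone majorant $T$ relating the two indices. Without these, the combinatorial passage from $K(x) \in \mathcal{F}(\mathcal{I})$ to $N(x) \in \mathcal{F}(\mathcal{I})$ fails.
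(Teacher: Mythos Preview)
Your outline diverges from the paper's argument at a crucial point, and the final step as you describe it does not go through.

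The paper does \emph{not} apply $S_{fin}(\Omega,\mathcal{I}\text{-}\mathcal{O}^{gp})$ to the per-stage $\omega$-covers $\mathcal{U}_n^\ast$. Instead, after passing to countable $\omega$-covers $\mathcal{W}_n=\{V_{n,k}:k\in\omega\}$, it forms the \emph{diagonal intersections}
\[
\mathcal{P}_n=\{V_{1,m_1}\cap V_{2,m_2}\cap\cdots\cap V_{n,m_n}:n<m_1<\cdots<m_n\}\setminus\{\emptyset\},
\]
so that every element of $\mathcal{P}_n$ carries a marked component in each $\mathcal{W}_j$ for $j\le n$. After applying the selection principle to $\langle\mathcal{P}_n\rangle$ and writing the resulting $\mathcal{I}$-groupable cover as $\bigcup_n\mathcal{A}_n=\bigcup_k\mathcal{H}_k$, the sets $A_n=\{k:\mathcal{H}_k\subseteq\bigcup_{i>n}\mathcal{A}_i\}$ are cofinite and \emph{do not depend on $x$}; one applies $S_1(\mathcal{F}(\mathcal{I}),\mathcal{F}(\mathcal{I}))$ to \emph{these} sets to obtain a single increasing sequence $k_1<k_2<\cdots$ in $\mathcal{F}(\mathcal{I})$, and then projects each element of $\mathcal{H}_{k_n}$ onto its $n$-th component to get $\mathcal{G}_n\subseteq\mathcal{V}_n$ with $\bigcup\mathcal{H}_{k_n}\subseteq\bigcup\mathcal{G}_n$. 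Inverse invariance is then used with $f(n)=k_n$: from $\{k_n:x\in\bigcup\mathcal{H}_{k_n}\}=\{m:x\in\bigcup\mathcal{H}_m\}\cap\{k_n:n\in\omega\}\in\mathcal{F}(\mathcal{I})$ one pulls back to $\{n:x\in\bigcup\mathcal{H}_{k_n}\}\in\mathcal{F}(\mathcal{I})$, hence $\{n:x\in\bigcup\mathcal{G}_n\}\in\mathcal{F}(\mathcal{I})$.

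By omitting the intersection construction, you are left with selections $\mathcal{H}_n\subseteq\mathcal{U}_n^\ast$ tied rigidly to stage $n$, and you must prove directly that $N(x)=\{n:x\in\bigcup\mathcal{H}_n\}\in\mathcal{F}(\mathcal{I})$ from $K(x)\in\mathcal{F}(\mathcal{I})$. Your proposed witnesses $n_k(x)\in N_k\cap N(x)$ depend on $x$, so there is no sequence of filter sets, independent of $x$, to which $S_1(\mathcal{F}(\mathcal{I}),\mathcal{F}(\mathcal{I}))$ can be applied to produce them. Even granting a uniform choice $k\mapsto n_k$, the direction is wrong: you know $K(x)\in\mathcal{F}(\mathcal{I})$ and want $\{n_k:k\in K(x)\}\subseteq N(x)$ to lie in $\mathcal{F}(\mathcal{I})$, which is \emph{forward} preservation along $k\mapsto n_k$; inverse invariance only gives $f(A)\in\mathcal{F}(\mathcal{I})\Rightarrow A\in\mathcal{F}(\mathcal{I})$, i.e.\ backward preservation. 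Your closing paragraph accurately names the obstacle but does not resolve it; the resolution in the paper is precisely the diagonal-intersection trick, which lets the reindexing run in the direction that inverse invariance supports.
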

\begin{proof}
Let $<\mathcal{U}_n : n \in \omega>$ be a sequence of countable open covers of $X$. Let $M$
be the set of all those positive integers $n$ such that $\mathcal{U}_n$ contains a finite subset covering $X$. If $M \in \mathcal{F}(\mathcal{I})$ then there is nothing to prove. Otherwise rewriting $<\mathcal{U}_n : n \in \omega \setminus M>$ as $<\mathcal{U}_n : n \in \omega>$ we have to show that each of these $\mathcal{U}_n$ has a finite sub-family satisfying the condition of $\mathcal{I}$-Hurewicz property. For each $n$ define the set $\mathcal{V}_n$ to be the set of finite unions of elements of $\mathcal{U}_n$. Then each $\mathcal{V}_n$ is in $\Omega$. Since $X$ is $\omega$-Lindel$\ddot{o}$f, there is a countable $\omega$-subcover $\mathcal{W}_n$, say $\mathcal{W}_n = \{V_{n,k} : k \in \omega\}$. From covers $\mathcal{W}_n$, $n \in \omega$, we define new covers $\mathcal{P}_n \in \Omega$ in the following way :
\begin{center}
$n =1$ : $\mathcal{P}_1 = \mathcal{W}_1$; \\
$n > 1$ : $\mathcal{P}_n = \{V_{1,m_1} \cap V_{2,m_2} \cap ... \cap V_{n,m_n} : n < m_1 < m_2 <...<m_n \} \setminus \{\emptyset\}$. 
\end{center}
For each element of $\mathcal{P}_n$ choose a representation of the form $V_{1,m_1} \cap V_{2,m_2} \cap ... \cap V_{n,m_n}$ with $n < m_1 < m_2 <...<m_n$.
Apply (2) to the sequence $<\mathcal{P}_n : n \in \omega>$ to find for each $n$ a finite set $\mathcal{A}_n \subset \mathcal{P}_n$ such that $\bigcup_{n \in \omega} \mathcal{A}_n \in \mathcal{I}-\mathcal{O}^{gp}$. Then we can choose finite, pairwise disjoint sets $\mathcal{H}_n$, $n \in \omega$, such that $\bigcup_{n \in \omega} \mathcal{A}_n = \bigcup_{n \in \omega} \mathcal{H}_n$, and each $x \in X$, $\{n \in \omega : x \notin \bigcup \mathcal{H}_n\} \in \mathcal{I}$.

Now for $n$, define $A_n = \{k \in \omega : \mathcal{H}_k \subseteq \bigcup_{i > n} \mathcal{A}_i\}$. Since each $\mathcal{H}_n$ is finite and pairwise disjoint, each $A_n$ is cofinite subset of natural numbers and hence belongs to $\mathcal{F}(\mathcal{I})$(as $\mathcal{I}$ is admissible). As $S_1(\mathcal{F}(\mathcal{I}), \mathcal{F}(\mathcal{I}))$ holds, there is a sequence $<k_i :  i \in \omega>$ such that $k_i \in A_i$ for each $i$, and $\{k_1 < k_2 < ... \} \in \mathcal{F}(\mathcal{I})$. Then $\mathcal{H}_{k_1} \subseteq \bigcup_{i > 1} \mathcal{A}_i$. Let $\mathcal{G}_1$ denote the set of all $V_{1,p}$, that occurs as a first components in the chosen representations of elements of $\mathcal{H}_{k_1}$. Now $\mathcal{H}_{k_2} \subseteq \bigcup_{i > 2} \mathcal{A}_i$. Let $\mathcal{G}_2$ denote the set of all $V_{1,p}$, that occurs as a second components in the chosen representations of elements of $\mathcal{H}_{k_2}$. Continuing in this way, we obtain finite sets $\mathcal{G}_n \subset \mathcal{V}_n$.

Since $\mathcal{I}$ is an inverse invariant ideal, for $f : \omega \rightarrow \omega$ such that $f(i) = k_i$, $f(A) \in \mathcal{F}(\mathcal{I})$ implies that $A \in \mathcal{F}(\mathcal{I})$. For each $x \in X$, $\{n \in \omega : x \in \bigcup \mathcal{G}_n\} \supseteq \{n \in \omega : x \in \bigcup \mathcal{H}_{k_n}\}$ and $f(\{n \in \omega : x \in \bigcup \mathcal{H}_{k_n}\}) = \{k_n \in \omega : x \in \bigcup \mathcal{H}_{k_n}\} = \{n \in \omega : x \in \bigcup \mathcal{H}_n\} \cap \{k_n : n \in \omega\} \in \mathcal{F}(\mathcal{I})$, since $\{n \in \omega : x \in \bigcup \mathcal{H}_n\}, \{k_n : n \in \omega\} \in \mathcal{F}(\mathcal{I})$. So $\{n \in \omega : x \in \bigcup \mathcal{G}_n\}  \in \mathcal{F}(\mathcal{I})$. 
For each element $F$ of $\mathcal{G}_n$ choose finitely many elements of $\mathcal{U}_n$ whose union is $F$ and let $\mathcal{L}_n$ denote the finite set of elements of $\mathcal{U}_n$ chosen in this way and $\bigcup \mathcal{G}_n = \bigcup \mathcal{L}_n$ for each $n$. Then the sequence $<\mathcal{L}_n : n \in \omega>$ witnesses for $<\mathcal{U}_n : n \in \omega>$ the $\mathcal{I}$-Hurewicz property of $X$.
\end{proof}

\begin{corollary} \label{30}
Let $\mathcal{I}$ be an inverse invariant ideal and $S_1(\mathcal{F}(\mathcal{I}), \mathcal{F}(\mathcal{I}))$ holds. For an $\omega$-Lindel$\ddot{o}$f space $X$ for which $CDR_{sub}(\mathcal{O},\mathcal{O})$ holds, the following statements are equivalent :
\begin{enumerate}
\item
$X$ has $\mathcal{I}H$ property;
\item
$S_{fin}(\mathcal{O}, \mathcal{I}-\mathcal{O}^{gp})$ holds;
\item 
$S_{fin}(\Lambda, \mathcal{I}-\mathcal{O}^{gp})$ holds;
\item 
$S_{fin}(\Omega, \mathcal{I}-\mathcal{O}^{gp})$ holds.
\end{enumerate}
\end{corollary}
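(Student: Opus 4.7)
The plan is to establish the equivalences by running the cycle $(1) \Rightarrow (2) \Rightarrow (3) \Rightarrow (4) \Rightarrow (1)$, with every link supplied by a result already proved earlier in the section; no new technical work is required.

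The implication $(1) \Rightarrow (2)$ is exactly the earlier theorem of this section, which states that $\mathcal{I}H$ together with $CDR_{sub}(\mathcal{O},\mathcal{O})$ yields $S_{fin}(\mathcal{O}, \mathcal{I}-\mathcal{O}^{gp})$; both of these hypotheses sit inside the standing assumptions of the corollary. The implications $(2) \Rightarrow (3) \Rightarrow (4)$ are formal, coming from the set-theoretic containments $\Omega \subseteq \Lambda \subseteq \mathcal{O}$ observed in the remark just before the corollary: shrinking the source class in $S_{fin}(\mathcal{A}, \mathcal{B})$ makes the statement formally stronger, so a sequence of $\omega$-covers is a fortiori a sequence of large covers, and a sequence of large covers is a fortiori a sequence of open covers, and the same witnessing sequence of finite subsets works verbatim.

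Finally, $(4) \Rightarrow (1)$ is precisely Theorem \ref{35}, whose hypotheses---$\mathcal{I}$ inverse invariant, $S_1(\mathcal{F}(\mathcal{I}), \mathcal{F}(\mathcal{I}))$ holding, $X$ being $\omega$-Lindel$\ddot{o}$f, and $S_{fin}(\Omega, \mathcal{I}-\mathcal{O}^{gp})$---are exactly the standing assumptions of the corollary combined with assertion (4). The only task is to verify that the hypotheses line up correctly at each arrow; all the substantive technical work (the construction of the auxiliary covers $\mathcal{P}_n$, the invocation of $S_1(\mathcal{F}(\mathcal{I}), \mathcal{F}(\mathcal{I}))$, and the inverse-invariance manipulation using $f(i) = k_i$) is already packaged inside Theorem \ref{35}. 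Consequently there is no genuine obstacle here; the corollary functions as a bookkeeping assembly of the three preceding results into a single clean equivalence.
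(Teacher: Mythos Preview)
Your cycle $(1)\Rightarrow(2)\Rightarrow(3)\Rightarrow(4)\Rightarrow(1)$ is exactly the assembly the paper intends: the corollary is stated without proof precisely because it is a direct concatenation of the theorem giving $(1)\Rightarrow(2)$ under $CDR_{sub}(\mathcal{O},\mathcal{O})$, the remark recording $(2)\Rightarrow(3)\Rightarrow(4)$, and Theorem~\ref{35} closing the loop. One wording slip: shrinking the source class $\mathcal{A}$ in $S_{fin}(\mathcal{A},\mathcal{B})$ makes the principle formally \emph{weaker}, not stronger---but your subsequent sentence (``a sequence of $\omega$-covers is a fortiori a sequence of large covers\ldots and the same witnessing sequence works'') shows you have the logic right, so the implications $(2)\Rightarrow(3)\Rightarrow(4)$ stand.
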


\begin{theorem}
Let $\mathcal{I}$ be an inverse invariant ideal and $X$ be a Lindel$\ddot{o}$f space. If $X$ satisfies $S_{fin}(\mathcal{O}, \mathcal{I}-\mathcal{O}^{gp})$, then $X$ has the $\mathcal{I}$-Hurewicz property provided that $S_1(\mathcal{F}(\mathcal{I}), \mathcal{F}(\mathcal{I}))$ holds.
\end{theorem}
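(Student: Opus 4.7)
The plan is to adapt the proof of Theorem~\ref{35} almost verbatim, replacing the role of ``$\omega$-cover'' by ``open cover'' throughout. The only place where the $\omega$-Lindel\"of hypothesis was used there was to obtain countable $\omega$-subcovers $\mathcal{W}_n$ of the finite-union covers $\mathcal{V}_n$; here ordinary Lindel\"ofness already gives countable subcovers of the $\mathcal{U}_n$ themselves, so we may bypass the finite-union step and work directly with an enumeration $\mathcal{U}_n = \{U_{n,k} : k \in \omega\}$. The hypothesis $S_{fin}(\mathcal{O}, \mathcal{I}-\mathcal{O}^{gp})$ is also more generous than $S_{fin}(\Omega, \mathcal{I}-\mathcal{O}^{gp})$ in the sense needed here, so the intermediate step of checking that an auxiliary cover lies in $\Omega$ disappears as well.

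I would begin with a sequence $\langle \mathcal{U}_n : n \in \omega \rangle$ of open covers of $X$, pass to countable subcovers by Lindel\"ofness, and, for each $n$, form the auxiliary open cover
$$\mathcal{P}_n = \{ U_{1,m_1} \cap U_{2,m_2} \cap \cdots \cap U_{n,m_n} : (m_1, \dots, m_n) \in \omega^n \} \setminus \{\emptyset\},$$
fixing once and for all such a representation for each of its members. Each $\mathcal{P}_n$ is an open cover of $X$ (for any $x$, pick $m_i$ with $x \in U_{i,m_i}$ and intersect) and is countable. Applying $S_{fin}(\mathcal{O}, \mathcal{I}-\mathcal{O}^{gp})$ to $\langle \mathcal{P}_n \rangle$ yields finite $\mathcal{A}_n \subset \mathcal{P}_n$ such that $\bigcup_n \mathcal{A}_n$ is $\mathcal{I}$-groupable; write this union as $\bigcup_n \mathcal{H}_n$ with the $\mathcal{H}_n$ finite, pairwise disjoint, and $\{n : x \notin \bigcup \mathcal{H}_n\} \in \mathcal{I}$ for every $x \in X$.

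The rest of the bookkeeping of Theorem~\ref{35} transfers unchanged. Setting $A_n = \{k : \mathcal{H}_k \subseteq \bigcup_{i > n} \mathcal{A}_i\}$ gives cofinite sets, hence elements of $\mathcal{F}(\mathcal{I})$, to which $S_1(\mathcal{F}(\mathcal{I}), \mathcal{F}(\mathcal{I}))$ assigns a strictly increasing sequence $k_1 < k_2 < \cdots$ with $k_n \in A_n$, $k_n \geq n$, and $\{k_n : n \in \omega\} \in \mathcal{F}(\mathcal{I})$. Since each element $e$ of $\mathcal{H}_{k_n}$ lies in some $\mathcal{A}_i$ with $i > n$, its chosen representation has an $n$-th factor $U_{n, m_n^{(e)}} \in \mathcal{U}_n$; collecting these factors produces a finite $\mathcal{L}_n \subset \mathcal{U}_n$ with $\bigcup \mathcal{L}_n \supseteq \bigcup \mathcal{H}_{k_n}$. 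In contrast with Theorem~\ref{35}, no final step of re-expanding finite unions back into elements of $\mathcal{U}_n$ is needed.

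The main obstacle, exactly as in Theorem~\ref{35}, is converting the regrouping index governing the $\mathcal{I}$-groupable property back into the original index $n$. With $f(n) = k_n$, which is increasing and satisfies $f(n) \geq n$, we have
$$f\bigl(\{n : x \in \bigcup \mathcal{H}_{k_n}\}\bigr) = \{k_n : n \in \omega\} \cap \{m : x \in \bigcup \mathcal{H}_m\} \in \mathcal{F}(\mathcal{I}),$$
using that $\mathcal{F}(\mathcal{I})$ is closed under finite intersection. Inverse invariance of $\mathcal{I}$ then pulls this back to $\{n : x \in \bigcup \mathcal{H}_{k_n}\} \in \mathcal{F}(\mathcal{I})$, whence $\{n : x \in \bigcup \mathcal{L}_n\} \in \mathcal{F}(\mathcal{I})$ for every $x \in X$. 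This is exactly the $\mathcal{I}$-Hurewicz selection for $\langle \mathcal{U}_n \rangle$, completing the argument.
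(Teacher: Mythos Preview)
Your proposal is correct and follows essentially the same approach as the paper, which simply states that the proof is ``similar with the proof of Theorem~\ref{35} after necessary modifications.'' You have carried out precisely those modifications: dropping the finite-union step and the index constraint on $\mathcal{P}_n$ (both of which were only needed to manufacture $\omega$-covers for the weaker hypothesis $S_{fin}(\Omega,\mathcal{I}\text{-}\mathcal{O}^{gp})$), and otherwise reproducing the bookkeeping with $A_n$, $S_1(\mathcal{F}(\mathcal{I}),\mathcal{F}(\mathcal{I}))$, and inverse invariance verbatim.
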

\begin{proof}
The proof is similar with the proof of Theorem \ref{35} after necessary modifications.
\end{proof}

\begin{corollary} \label{44}
Let $\mathcal{I}$ be an inverse invariant ideal and $S_1(\mathcal{F}(\mathcal{I}), \mathcal{F}(\mathcal{I}))$ holds. For a Lindel$\ddot{o}$f space $X$ for which $CDR_{sub}(\mathcal{O},\mathcal{O})$ holds, the following statements are equivalent :
\begin{enumerate}
\item
$X$ has $\mathcal{I}H$ property;
\item
X satisfies $S_{fin}(\mathcal{O},\mathcal{O})$ and $\mathcal{O} = \mathcal{I}-\mathcal{O}^{gp}$;
\item
X satisfies $S_{fin}(\mathcal{O}, \mathcal{I}-\mathcal{O}^{gp})$;
\item
ONE has no winning strategy in the game $G_{fin}(\mathcal{O}, \mathcal{I}-\mathcal{O}^{gp})$.

\end{enumerate}
\end{corollary}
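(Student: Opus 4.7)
The plan is to assemble Corollary \ref{44} by cycling through the four statements using results already proved in the excerpt, so almost no new work is required. I would first set up the equivalences (2) $\Leftrightarrow$ (3) $\Leftrightarrow$ (4) by invoking the first theorem of Section~3, whose hypotheses (Lindel\"ofness) are exactly what we have here; that theorem gives this three-way equivalence outright, so this part of the corollary is free. The assumptions on $\mathcal{I}$ (inverse invariance) and on $S_1(\mathcal{F}(\mathcal{I}), \mathcal{F}(\mathcal{I}))$ and on $CDR_{sub}(\mathcal{O},\mathcal{O})$ play no role in this portion.

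Next I would close the loop by proving (1) $\Leftrightarrow$ (3). For (3) $\Rightarrow$ (1), I would simply cite the theorem immediately preceding the corollary, which states that for a Lindel\"of space $X$, with $\mathcal{I}$ an inverse invariant ideal and $S_1(\mathcal{F}(\mathcal{I}),\mathcal{F}(\mathcal{I}))$ in force, $S_{fin}(\mathcal{O}, \mathcal{I}-\mathcal{O}^{gp})$ implies $\mathcal{I}H$; every hypothesis of that theorem is included in the hypotheses of Corollary \ref{44}. For (1) $\Rightarrow$ (3), I would invoke the earlier theorem that says: if $X$ has the $\mathcal{I}H$ property and $CDR_{sub}(\mathcal{O},\mathcal{O})$ holds for $X$, then $S_{fin}(\mathcal{O}, \mathcal{I}-\mathcal{O}^{gp})$ holds. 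This is exactly where the hypothesis $CDR_{sub}(\mathcal{O},\mathcal{O})$ is used, and it is the only place it is needed.

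Once both directions (1) $\Rightarrow$ (3) and (3) $\Rightarrow$ (1) are in hand, combined with (2) $\Leftrightarrow$ (3) $\Leftrightarrow$ (4) from the first theorem of Section~3, the full equivalence of (1), (2), (3), (4) follows. There is no genuine obstacle here because every implication has already been established in the paper; the only thing one needs to check is that the hypotheses of Corollary \ref{44} (Lindel\"ofness, $CDR_{sub}(\mathcal{O},\mathcal{O})$, inverse invariance of $\mathcal{I}$, and $S_1(\mathcal{F}(\mathcal{I}),\mathcal{F}(\mathcal{I}))$) are strong enough to apply each cited theorem at the right moment. The one mildly delicate point is to notice that the hypothesis $CDR_{sub}(\mathcal{O},\mathcal{O})$ is required only for the direction (1) $\Rightarrow$ (3) and not for any of the other implications, so the proof should invoke it exactly there and no earlier.
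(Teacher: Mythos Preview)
Your proposal is correct and matches the paper's intended approach: Corollary~\ref{44} is stated without proof precisely because it is meant to be assembled from the three preceding results you cite --- the first theorem of Section~3 for the equivalence of (2), (3), (4), the theorem on $\mathcal{I}H + CDR_{sub}(\mathcal{O},\mathcal{O}) \Rightarrow S_{fin}(\mathcal{O},\mathcal{I}\text{-}\mathcal{O}^{gp})$ for (1) $\Rightarrow$ (3), and the theorem immediately preceding the corollary for (3) $\Rightarrow$ (1). Your identification of where each hypothesis is actually needed is also accurate.
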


Recall the following notion in Ramsey theory, called the Baumgartner-Taylor partition relation (see \cite{H36}). For each positive integer $k$,
\begin{center}
$\mathcal{A} \rightarrow [\mathcal{B}]^{2}_k$
\end{center}
denotes the following statement:
For each $A \in \mathcal{A}$ and for each function $f : [A]^2 \rightarrow \{1, ... , k\}$ there are a set $B \in \mathcal{B}$ with $B \subseteq A$, a $j \in \{1, ... , k\}$, and a partition $B = \bigcup_{n \in \omega} B_n$ of $B$ into pairwise disjoint finite sets such that for each $\{a,b\} \in [\mathcal{B}]^2$ for which $a$ and $b$ are not from the same $B_n$, we have $f(\{a, b\}) = j$.
Here $[\mathcal{A}]^2$ denotes the set of all two-element subsets of $A$.

\begin{theorem}
Let $X$ be a Lindel$\ddot{o}$f space satisfying the following condition :
for each $k \in \omega$ the partition relation $\mathcal{O} \rightarrow [\mathcal{I}-\mathcal{O}^{gp}]^2_k$ holds,
then $X$ has satisfies $S_{fin}(\mathcal{O}, \mathcal{I}-\mathcal{O}^{gp})$.
\end{theorem}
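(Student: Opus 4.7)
The plan is to apply the hypothesized partition relation with $k=2$ to a single open cover built out of the given sequence $\langle \mathcal{U}_n : n \in \omega \rangle$. Since $X$ is Lindel\"of, I first pass each $\mathcal{U}_n$ to a countable subcover, and then, by replacing each $U \in \mathcal{U}_n$ by a labelled copy, I may assume that the $\mathcal{U}_n$'s are pairwise disjoint. Write $\mathcal{U} = \bigcup_{n \in \omega} \mathcal{U}_n$ and let $\pi : \mathcal{U} \to \omega$ be the projection recording the label.

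Next I define a $2$-coloring $f : [\mathcal{U}]^2 \to \{1,2\}$ by $f(\{U,V\}) = 1$ if $\pi(U) = \pi(V)$ and $f(\{U,V\}) = 2$ otherwise, and invoke $\mathcal{O} \to [\mathcal{I}-\mathcal{O}^{gp}]^2_2$. This yields a set $B \subseteq \mathcal{U}$ with $B \in \mathcal{I}-\mathcal{O}^{gp}$, a partition $B = \bigcup_{m \in \omega} B_m$ into pairwise disjoint finite pieces witnessing the $\mathcal{I}$-groupability of $B$, and a color $j$ such that every pair of elements drawn from two distinct pieces $B_m$ and $B_{m'}$ is monochromatic of color $j$.

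The intended outcome is $j = 2$. In that case, two elements of $B$ sharing a common label $\pi = n$ cannot lie in distinct $B_m$'s (else the coloring would return $1$), so $B \cap \mathcal{U}_n$ is contained in a single $B_m$ and is therefore finite for every $n$. Setting $\mathcal{V}_n := B \cap \mathcal{U}_n$, each $\mathcal{V}_n$ is a finite subset of $\mathcal{U}_n$ and $\bigcup_n \mathcal{V}_n = B \in \mathcal{I}-\mathcal{O}^{gp}$, which is precisely what $S_{fin}(\mathcal{O}, \mathcal{I}-\mathcal{O}^{gp})$ demands for this sequence.

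The main obstacle is the residual case $j = 1$: here all cross-piece pairs share a label, forcing $B \subseteq \mathcal{U}_{n^{*}}$ for a single index $n^{*}$. My plan for this case is an iteration. I discard $\mathcal{U}_{n^{*}}$ from the sequence and reapply the partition relation to the open cover $\bigcup_{n \neq n^{*}} \mathcal{U}_n$; if the outcome is again $j = 1$ I discard the next offending index, and so on. At the first stage that yields $j = 2$, the preceding paragraph produces finite $\mathcal{V}_n$'s for the surviving indices, and arbitrary finite choices (e.g.\ $\emptyset$) can be made for the finitely many discarded indices without spoiling the conclusion, since adjoining a finite set to the first piece of an $\mathcal{I}$-groupable partition leaves it $\mathcal{I}$-groupable. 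The delicate point---where the full strength of the hypothesis "for every $k$" is expected to enter---is ruling out the pathological possibility that the iteration stays in case $j = 1$ at every stage.
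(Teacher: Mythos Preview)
Your $j=2$ analysis is correct and matches the paper's, but your handling of $j=1$ has a real gap. The iteration you propose need not terminate: nothing prevents the partition relation from returning $j=1$ at every stage, producing an infinite sequence of discarded indices and no finite selections for the surviving ones. You suspect the ``for every $k$'' hypothesis rescues this, but the paper's argument uses only $k=2$, so the resolution must come from elsewhere.

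The paper avoids the iteration entirely by applying the partition relation not to $\bigcup_n \mathcal{U}_n$ but to the refined cover
\[
\mathcal{V} = \{\,U_{1,n} \cap U_{n,k} : n,k \in \omega,\ U_{1,n} \cap U_{n,k} \neq \emptyset\,\},
\]
where $\mathcal{U}_m = \{U_{m,k} : k \in \omega\}$. The coloring is the same idea as yours: two elements receive color $1$ exactly when their first indices $n$ agree. The purpose of intersecting with $U_{1,n}$ is that if $j=1$, then every element of the resulting $\mathcal{W}$ carries the same first index $n$, so $\bigcup\mathcal{W} \subseteq U_{1,n}$; assuming (as one may arrange) that $U_{1,n} \neq X$, this means $\mathcal{W}$ fails to cover $X$ and hence cannot lie in $\mathcal{I}-\mathcal{O}^{gp}$. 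Thus $j=1$ is ruled out outright, with no iteration needed. Your intersection-free cover $\bigcup_n \mathcal{U}_n$ lacks this feature: an $\mathcal{I}$-groupable subcover lying entirely inside one $\mathcal{U}_{n^*}$ is perfectly consistent, and that is exactly where your argument stalls.
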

\begin{proof}
Let $<\mathcal{U}_n : n \in \omega>$ be a sequence of countable elements of $\mathcal{O}$. Let $\mathcal{U}_n = \{U_{n,k} : k \in \omega\}$. Define $\mathcal{V}$ to be collection of nonempty sets of the form $U_{1,n} \cap U_{n,k}$. It is clear that $\mathcal{V} \in \mathcal{O}$. For each $V \in \mathcal{V}$ choose a representation of the form $V = U_{1,n} \cap U_{n,k}$. Then define the function $f : [\mathcal{V}]^2 \rightarrow \{1,2\}$ as : 
\begin{center}
$f(\{U_{1,n_1} \cap U_{n_1,k},U_{1,n_2} \cap U_{n_2,j}\}) = 1$ if $n_1 = n_2$ and $2$ otherwise.
\end{center}
By given hypothesis, choose a nearly homogeneous of color $j$ set $\mathcal{W} \subset \mathcal{V}$ with $\mathcal{W} \in \mathcal{I}-\mathcal{O}^{gp}$. Assume $\mathcal{W} = \bigcup_{k \in \omega} \mathcal{W}_k$ is a sequence of finite, pairwise disjoint sets such that for $A$ and $B$ from distinct $\mathcal{W}_k$'s we have $f(\{A,B\}) = j$. We have the following two possibilities.

Case 1 : $j = 1$. There is an $n$ such that for all $A \in \mathcal{W}$ we have $A \subset U_{1,n}$. This implies that $\mathcal{W}$ does not belong to $\mathcal{I}-\mathcal{O}^{gp}$. Thus, this case does not hold.

Case 2 : $j = 2$. For each $n > 1$ define $\mathcal{H}_n = \{U_{n,i} : U_{n,i}$ is the second coordinate in the representation of an $W \in \mathcal{W}\}$ and set $\mathcal{H} = \bigcup_{n \in \omega} \mathcal{H}_n$. Then $\mathcal{H}$ is the union of finite subsets of $\mathcal{U}_n$, $n \in \omega$ since $\mathcal{W} = \{U_{1,1} \cap U_{1,j} : j \in \{i^1_1,i^1_2,...,i^1_{n_1}\}\} \cup \{U_{1,2} \cap U_{2,j} : j \in \{i^2_1,i^2_2,...,i^2_{n_2}\}\} \cup ...$. Then for each $k$, $\mathcal{H}_k = \{U_{1,j} : j \in \{i^k_1,i^k_2,...,i^k_{n_1}\}\}$ and hence $\mathcal{H}_n$'s are pairwise disjoint and finite. Since $\mathcal{W} \in \mathcal{I}-\mathcal{O}^{gp}$, for each $x \in X$, $\{k \in \omega : x \notin \bigcup \mathcal{H}_k\} \subseteq \{k \in \omega : x \notin \bigcup \{U_{1,1} \cap U_{1,j} : j \in \{i^k_1,i^k_2,...,i^k_{n_1}\}\}\} \in \mathcal{I}$. If for some $k$, $\mathcal{H}_k$ was not defined, we set $\mathcal{H}_k = \emptyset$. So we get the sequence $<\mathcal{H}_k : k \in \omega>$ witnessing for $<\mathcal{U}_k : k \in \omega>$ that $S_{fin}(\mathcal{O}, \mathcal{I}-\mathcal{O}^{gp})$ holds.
\end{proof}

\begin{corollary}
Let $\mathcal{I}$ be an inverse invariant ideal for which $S_1(\mathcal{F}(\mathcal{I}), \mathcal{F}(\mathcal{I}))$ holds and $X$ be a Lindel$\ddot{o}$f space. If $X$ satisfies the following condition :
for each $k \in \omega$ the partition relation $\mathcal{O} \rightarrow [\mathcal{I}-\mathcal{O}^{gp}]^2_k$ holds,
then $X$ has $\mathcal{I}$-Hurewicz property.
\end{corollary}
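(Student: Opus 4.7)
The plan is to obtain this corollary as a direct composition of the two preceding theorems, with no new argument required beyond checking that the hypotheses line up.

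First, I would invoke the theorem immediately preceding this corollary, which asserts that if $X$ is a Lindel\"of space and, for every $k \in \omega$, the Baumgartner--Taylor partition relation $\mathcal{O} \rightarrow [\mathcal{I}-\mathcal{O}^{gp}]^2_k$ holds, then $X$ satisfies $S_{fin}(\mathcal{O}, \mathcal{I}-\mathcal{O}^{gp})$. The hypotheses of the corollary supply exactly these assumptions on $X$, so this step is automatic and gives the intermediate conclusion that $X \in S_{fin}(\mathcal{O}, \mathcal{I}-\mathcal{O}^{gp})$.

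Second, I would feed this conclusion into the theorem stated just above Corollary \ref{44}, which reads: if $\mathcal{I}$ is an inverse invariant ideal, $X$ is Lindel\"of, $X$ satisfies $S_{fin}(\mathcal{O}, \mathcal{I}-\mathcal{O}^{gp})$, and $S_1(\mathcal{F}(\mathcal{I}), \mathcal{F}(\mathcal{I}))$ holds, then $X$ has the $\mathcal{I}$-Hurewicz property. All four hypotheses are present: inverse invariance of $\mathcal{I}$ and $S_1(\mathcal{F}(\mathcal{I}), \mathcal{F}(\mathcal{I}))$ are assumed in the corollary, $X$ is Lindel\"of by hypothesis, and the selection property $S_{fin}(\mathcal{O}, \mathcal{I}-\mathcal{O}^{gp})$ was produced in the first step. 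The conclusion that $X$ has the $\mathcal{I}$-Hurewicz property then follows immediately.

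Since the proof is a pure chain of implications through results already established in the paper, there is no genuine obstacle; the only thing worth being careful about is ensuring the countability/$\omega$-Lindel\"ofness assumptions are not silently invoked. Here both invoked results require only Lindel\"ofness (not $\omega$-Lindel\"ofness, as in Theorem \ref{35} and Corollary \ref{30}), and the corollary inherits the stronger Lindel\"ofness hypothesis, so the composition goes through cleanly and the proof can reasonably be written in two sentences.
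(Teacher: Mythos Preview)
Your proposal is correct and matches the paper's intended argument: the corollary is stated without proof precisely because it is the immediate composition of the preceding partition-relation theorem (yielding $S_{fin}(\mathcal{O}, \mathcal{I}\text{-}\mathcal{O}^{gp})$ for Lindel\"of $X$) with the theorem just before Corollary~\ref{44} (converting $S_{fin}(\mathcal{O}, \mathcal{I}\text{-}\mathcal{O}^{gp})$ into the $\mathcal{I}$-Hurewicz property under inverse invariance and $S_1(\mathcal{F}(\mathcal{I}),\mathcal{F}(\mathcal{I}))$). Your care in distinguishing the Lindel\"of version from the $\omega$-Lindel\"of version (Theorem~\ref{35}) is exactly the right check.
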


The converse remains as an open problem.

\textbf{$\mathcal{I}$-Hurewicz Basis property and $\mathcal{I}$-Hurewicz measure zero property}

In \cite{H47} Menger defined the following: Metric space $(X, d)$ has the Menger basis property if there is for each basis $\mathcal{B}$ of $(X, d)$ a sequence $<U_n : n \in \omega>$ such that
$lim_{n \rightarrow \infty} diam_d(U_n) = 0$, and $\{U_n : n \in \omega\}$ covers $X$. In \cite{H4} Hurewicz showed that the Menger basis property is equivalent to the Menger covering property $S_{fin}(\mathcal{O},\mathcal{O})$. When the spaces in question are metrizable the Hurewicz property can be similarly characterized by a Hurewicz basis property.
We show now that also the $\mathcal{I}$-Hurewicz property is characterized by a basis property. 

\begin{definition} \cite{H42}
A metric space $(X,d)$ is said to have Hurewicz basis property if for each basis $\mathcal{B}$ of metric space $(X,d)$ there is a sequence $<U_n : n \in \omega>$ of elements of $\mathcal{B}$ such that $\{U_n : n \in \omega\}$ is a groupable cover of $X$ and $lim_{n \rightarrow \infty} diam_d(U_n) = 0$.
\end{definition}

Now we define its ideal version.

\begin{definition} 
A metric space $(X,d)$ is said to have $\mathcal{I}$-Hurewicz basis property if for each basis $\mathcal{B}$ of metric space $(X,d)$ there is a sequence $<U_n : n \in \omega>$ of elements of $\mathcal{B}$ such that $\{U_n : n \in \omega\}$ is an $\mathcal{I}$-groupable cover of $X$ and $lim_{n \rightarrow \infty} diam_d(U_n) = 0$.
\end{definition}

For next theorem we consider inverse invariant ideals.

\begin{theorem}
Let $\mathcal{I}$ be an inverse invariant ideal such that $S_1(\mathcal{F}(\mathcal{I}),\mathcal{F}(\mathcal{I}))$ holds. If $(X,d)$ is a metric space with no isolated points for which $CDR_{sub}(\mathcal{O}, \mathcal{O})$ holds, then following statements are equivalent:
\begin{enumerate}
\item
$X$ has the $\mathcal{I}$-Hurewicz property;
\item
$X$ has the $\mathcal{I}$-Hurewicz basis property.
\end{enumerate}
\end{theorem}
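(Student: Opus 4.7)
The plan is to reduce both directions to the covering-principle form $S_{fin}(\mathcal{O}, \mathcal{I}-\mathcal{O}^{gp})$ via Corollary~\ref{44}, and then adapt the Scheepers-style proof of the classical Hurewicz basis equivalence to the ideal setting.

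For (1) $\Rightarrow$ (2), I would fix any basis $\mathcal{B}$ of $(X,d)$ and, for each $n \in \omega$, form the subfamily $\mathcal{U}_n = \{B \in \mathcal{B} : \operatorname{diam}_d(B) < 1/n\}$. This is an open cover of $X$ since every point of a metric space has basis neighbourhoods of arbitrarily small diameter. By Corollary~\ref{44}, the $\mathcal{I}$-Hurewicz property yields $S_{fin}(\mathcal{O}, \mathcal{I}-\mathcal{O}^{gp})$, so applying it to $\langle \mathcal{U}_n : n \in \omega \rangle$ produces finite $\mathcal{V}_n \subseteq \mathcal{U}_n$ with $\bigcup_{n \in \omega} \mathcal{V}_n \in \mathcal{I}-\mathcal{O}^{gp}$. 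After disjointifying (replacing $\mathcal{V}_n$ by $\mathcal{V}_n \setminus \bigcup_{m<n}\mathcal{V}_m$) and listing the members of $\mathcal{V}_1, \mathcal{V}_2, \dots$ consecutively as $\langle U_k : k \in \omega \rangle$, each element contributed by $\mathcal{V}_n$ has diameter below $1/n$, giving $\operatorname{diam}_d(U_k) \to 0$; the partition witnessing $\bigcup_n \mathcal{V}_n \in \mathcal{I}-\mathcal{O}^{gp}$ supplies the $\mathcal{I}$-groupable structure required by the basis property.

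For (2) $\Rightarrow$ (1), I would again aim at $S_{fin}(\mathcal{O}, \mathcal{I}-\mathcal{O}^{gp})$. Given a sequence of open covers $\langle \mathcal{U}_n \rangle$, I would construct a ``labelled'' basis $\mathcal{B}$: for each $x \in X$, each $n \in \omega$, and each $U \in \mathcal{U}_n$ with $x \in U$, pick a basis element $B_{x,U,n}$ with $x \in B_{x,U,n} \subseteq U$ and $\operatorname{diam}_d(B_{x,U,n}) < 1/n$; take $\mathcal{B}$ to be the collection of all such $B_{x,U,n}$ together with a fixed countable base, so that $\mathcal{B}$ is still a basis of $(X,d)$. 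Applying the $\mathcal{I}$-Hurewicz basis property to $\mathcal{B}$ gives a sequence $\langle W_k : k \in \omega \rangle$ of basis elements forming an $\mathcal{I}$-groupable cover with $\operatorname{diam}_d(W_k) \to 0$. The diameter collapse forces the label $n(k)$ associated to each $W_k = B_{x(k),U(k),n(k)}$ to tend to infinity; replacing $W_k$ by the containing $U(k) \in \mathcal{U}_{n(k)}$ and gathering the $U(k)$'s by label yields finite sets $\mathcal{V}_n \subseteq \mathcal{U}_n$ whose union is still an $\mathcal{I}$-groupable cover, and Corollary~\ref{44} then delivers the $\mathcal{I}$-Hurewicz property.

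The step I expect to be the main obstacle is transferring the $\mathcal{I}$-groupable witness partition from the $W_k$-level back to the $\mathcal{V}_n$-level in the converse: the partition of $\{W_k\}$ certifying membership in $\mathcal{I}-\mathcal{O}^{gp}$ need not respect the labels $n(k)$, so extracting the required groupable witness at the level of the $\mathcal{U}_n$'s will rely on the inverse invariance of $\mathcal{I}$ and $S_1(\mathcal{F}(\mathcal{I}),\mathcal{F}(\mathcal{I}))$, used in the same manner as in the proof of Theorem~\ref{35}. The no-isolated-points assumption is what guarantees arbitrarily small nontrivial basis elements at every point, underpinning the diameter-decay enumerations in both implications.
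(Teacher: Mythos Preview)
Your direction (1) $\Rightarrow$ (2) is essentially the paper's argument and is fine.

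The converse (2) $\Rightarrow$ (1), however, has a genuine gap. In your labelled basis, each $B_{x,U,n}$ satisfies only the \emph{upper} bound $\operatorname{diam}_d(B_{x,U,n}) < 1/n$. Thus $\operatorname{diam}_d(W_k)\to 0$ does \emph{not} force $n(k)\to\infty$: nothing prevents infinitely many of the $W_k$ from carrying label $1$ (just pick $B_{x,U,1}$'s of arbitrarily small diameter). Without $n(k)\to\infty$ you cannot conclude that each $\mathcal{V}_n=\{U(k):n(k)=n\}$ is finite, and the whole extraction collapses. A secondary problem is the ``fixed countable base'' you adjoin: those sets carry no label at all, so any $W_k$ coming from that part of the basis cannot be pushed into any $\mathcal{U}_n$.

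The paper avoids both issues by building the basis with a \emph{lower} diameter bound tied to the index. After first replacing each $\mathcal{U}_n$ by its downward closure and forming $\mathcal{H}_n=\{U_1\cap\cdots\cap U_n: U_i\in\mathcal{U}_i\}\setminus\{\emptyset\}$, the basis used is
\[
\mathcal{U}=\{\,U\cup V:\ (\exists n)\ U,V\in\mathcal{H}_n\ \text{and}\ \operatorname{diam}_d(U\cup V)>1/n\,\}.
\]
The assumption that $X$ has no isolated points is used precisely here, to show that $\mathcal{U}$ is a basis: given $x\in W$ one chooses $y\in W\setminus\{x\}$ and $n$ with $d(x,y)>1/n$, producing $U\cup V\subseteq W$ with $\operatorname{diam}_d(U\cup V)>1/n$. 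Now when the basis property returns $\langle W_k\rangle$ with $\operatorname{diam}_d(W_k)\to 0$, the built-in inequality $\operatorname{diam}_d(W_k)>1/k_n$ forces the associated indices to tend to infinity, so only finitely many $W_k$ feed into each $\mathcal{U}_n$. The remaining transfer of the $\mathcal{I}$-groupable witness uses $S_1(\mathcal{F}(\mathcal{I}),\mathcal{F}(\mathcal{I}))$ and inverse invariance exactly as you anticipated. Your diagnosis of where the ideal hypotheses enter is correct; what is missing is the lower-diameter trick that makes the label recovery possible.
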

\begin{proof}
Let $X$ has the $\mathcal{I}$-Hurewicz property and let $\mathcal{B}$ be a basis of $X$. Now define $\mathcal{U}_n = \{U \in \mathcal{B} : diam_d(U) < 1/(n+1) \}$. Then for each $n$, $\mathcal{U}_n$ is a large open cover of $X$. Since $X$ has the $\mathcal{I}$-Hurewicz property, by Corollary \ref{44}, there is a sequence $<\mathcal{V}_n : n \in \omega>$ such that for each $n$, $\mathcal{V}_n$ is a finite subset of $\mathcal{U}_n \subseteq \mathcal{B}$ and for each $\bigcup_{n \in \omega} \mathcal{V}_n$ is an $\mathcal{I}$-groupable cover of $X$. Then for $\bigcup_{n \in \omega} \mathcal{V}_n = \{U_n : n \in \omega\}$, $lim_{n \rightarrow \infty} diam_d(U_n) = 0$.

Conversely, let $X$ be a space having $\mathcal{I}$-Hurewicz basis property and $<\mathcal{U}_n : n \in \omega>$ be a sequence of open covers of $X$. Now assume that if an open set $V$ is a subset of an element of $\mathcal{U}_n$, then $V \in \mathcal{U}_n$. 
For each $n$ define 
\begin{center}
$\mathcal{H}_n = \{U_1 \cap U_2 \cap ...\cap U_n : (\forall i \leq n)(U_i \in \mathcal{U}_i)\} \setminus \{\emptyset\}$. 
\end{center}
Then for each $n$, $\mathcal{H}_n$ is an open cover of $X$ and has the property that if an open set $V$ is a subset of an element of $\mathcal{H}_n$, then $V \in \mathcal{H}_n$. 

Now let $\mathcal{U}$ be the set $\{U \cup V : (\exists n)(U,V \in \mathcal{H}_n$ and $diam_d(U \cup V) > 1/n)\}$. First we show that $\mathcal{U}$ is a basis for $X$. For it, let $W$ be an open subset containing a point $x$. Since $(X,d)$ does not have isolated points, $x$ is not an isolated point of $X$. Then we can choose $y \in W \setminus \{x\}$ and $n > 1$ with $d(x,y) > 1/n$. Since $\mathcal{H}_n$ is an open cover of $X$, there are $U^{'},V^{'} \in \mathcal{H}_n$ such that $x \in U^{'}$ and $y \in V^{'}$. Now put $U = U^{'} \cap W \setminus \{y\}$ and $V = V^{'} \cap W \setminus \{x\}$. Then $U,V \in \mathcal{H}_n$ since $\mathcal{H}_n$ has the property that if an open set $V$ is a subset of an element of $\mathcal{H}_n$, then $V \in \mathcal{H}_n$. Also $U \cup V \subseteq W$ and $diam_d(U \cup V) \geq d(x,y) > 1/n$. So $U \cup V \in \mathcal{U}$ and $x \in U \cup V \subseteq W$. Thus $\mathcal{U}$ is a basis for $X$.

Since $X$ has $\mathcal{I}$-Hurewicz basis property, there is a sequence $<W_n : n \in \omega>$ of elements of $\mathcal{U}$ such that $\{W_n : n \in \omega\}$ is an $\mathcal{I}$-groupable cover of $X$ and $lim_{n \rightarrow \infty} diam_d(W_n) = 0$. Then $\bigcup_{n \in \omega} \mathcal{W}_n = \{W_n : n \in \omega\}$ such that each $\mathcal{W}_n$ is finite, $\mathcal{W}_n \cap \mathcal{W}_m = \emptyset$ for $n \neq m$ and each $x \in X$, $\{n \in \omega : x \notin \bigcup \mathcal{W}_n\} \in \mathcal{I}$. Without loss of generality, let 
\begin{center}
$\mathcal{W}_1 = \{W_1,W_2,...,W_{m_1 - 1}\}$; \\
$\mathcal{W}_2 = \{W_{m_1},W_{m_1 + 1},...,W_{m_2 - 1}\}$; \\
.\\
.\\
.\\
$\mathcal{W}_n = \{W_{m_{n-1}},W_{m_{n-1} + 1},...,W_{m_n}\}$; \\
\end{center}
and so on.
Now we get a sequence $m_1 < m_2 < m_3 < ...< m_k <...$ obtained from $\mathcal{I}$-groupability of  $\{W_n : n \in \omega\}$ such that for each $x \in X$, $\{k \in \omega : x \notin \bigcup \mathcal{W}_k\} = \{k \in \omega : x \notin W_j$ for all $j$ for which $m_{k-1} \leq j < m_k\} \in \mathcal{I}$.

Since $W_n \in \mathcal{U}$, so there is $k_n$ such that $U_n,V_n \in \mathcal{H}_{k_n}$ and $W_n = U_n \cup V_n$ with $diam_d(W_n) > 1/k_n$. For each $n$, select the least $k_n$ and sets $U_n$ and $V_n$ from $\mathcal{U}_{k_n}$. Since each $\mathcal{U}_n$ has the property that if an open set $V$ is a subset of an element of $\mathcal{U}_n$, then $V \in \mathcal{U}_n$, $U_n,V_n \in \mathcal{U}_{k_n}$. Since $lim_{n \rightarrow \infty} diam_d(W_n) = 0$, for each $W_n$, there is maximal $m_n$ such that $diam_d(W_n) < 1/m_n$. Then $1/k_n < diam_d(W_n) < 1/m_n$ implies that $k_n > m_n$ for each $n$ and $lim_{n \rightarrow \infty} m_n = \infty$. Since $lim_{n \rightarrow \infty} diam_d(W_n) = 0$, so for each $k_n$, there are only finitely many $W_n$ for which the representatives $U_n,V_n$ are from $\mathcal{U}_{k_n}$ and have $diam_d(U_n \cup V_n) > 1/k_n$. Let $\mathcal{V}_{k_n}$ be the finite set of such $U_n,V_n$. 

Now choose $l_1 > 1$ so large such that each $W_i$ with $i \leq m_1$ has a representation of the form $U \cup V$ and $U$'s  and $V$'s are from the sets $\mathcal{V}_{k_i}, k_i \leq l_1$.
Then select $j_1$ so large such that for all $i > j_1$, if $W_i$ has representatives from $\mathcal{V}_{k_i}$, then $k_i > l_1$.

For choosing $l_2$, let $m_k$ be least larger then $j_1$, and now choose $l_2$ so large that if $W_i$ with $m_k \leq i <m_{k+1}$ uses a $\mathcal{V}_{k_i}$, then $k_i \leq l_2$, that is, choose maximal of $k_i$ for which $m_k \leq i <m_{k+1}$ and say $l_2$, then $l_1 < k_i \leq l_2$. Now choose maximal of $i$ for which the representation of $W_i$ from $\mathcal{V}_{k_i}$ where $l_1 < k_i \leq l_2$ and say $j_2$, then $j_2 > j_1$ and $\forall i \geq j_2$ if $W_i$ uses $\mathcal{V}_{k_n}$, then $k_n > l_2$.

Similarly alternately choose $l_m$ and $j_m$. For each $m$ if we consider the least $m_k > l_m$, then :
\begin{enumerate}
\item
if $W_i$ with $m_k \leq i <m_{k+1}$ uses a $\mathcal{V}_{k_i}$ then $l_m < k_i \leq l_{m+1}$;
\item
if $i \geq j_m$ then if $W_i$ uses $\mathcal{V}_{k_n}$ then $k_n > l_m$.
\end{enumerate}

For each $V \in \mathcal{V}_{k_n}$ with $k_n \leq l_1$, $V \in \mathcal{H}_{k_n}$ and $V \in \mathcal{U}_{k_i}$ for each $k_i \leq k_n$. Then $V \in \mathcal{U}_1$ and let $\mathcal{G}_1$ be collection of such $V \in \mathcal{V}_{k_n}$ with $k_n \leq l_1$. Then $\mathcal{G}_1 \subseteq \mathcal{U}_1$ is a finite subset.

Now for each $V \in \mathcal{V}_{k_n}$ with $l_p < k_n \leq l_{p+1}$(as $p < l_p$), $V \in \mathcal{H}_{k_n}$ and $V \in \mathcal{U}_{k_i}$ for each $k_i \leq k_n$. Then $V \in \mathcal{U}_{l_p}$ and let $\mathcal{G}_p$ be collection of such $V \in \mathcal{V}_{k_n}$ with $l_p < k_n \leq l_{p+1}$. Then $\mathcal{G}_p$ is a finite subset of $\mathcal{U}_i$. Observe that $\mathcal{G}_i \subseteq \mathcal{U}_j$ for each $j \leq i$.

For each $k$, let $A_k = \{n \in \omega : \bigcup \mathcal{W}_n \subseteq \bigcup \bigcup_{i > k} \mathcal{G}_i\}$. Since each $\mathcal{W}_n$ is finite and pairwise disjoint, each $A_k$ is a cofinite subset of natural numbers and hence belongs to $\mathcal{F}(\mathcal{I})$(as $\mathcal{I}$ is admissible). Then by $S_1(\mathcal{F}(\mathcal{I}), \mathcal{F}(\mathcal{I}))$, there is a sequence $<k_i :  i \in \omega>$ such that $k_i \in A_i$ for each $i$, and $\{k_1 < k_2 < ... \} \in \mathcal{F}(\mathcal{I})$. Then $\mathcal{W}_{k_1} \subseteq \bigcup_{i > 1} \mathcal{G}_i$. Since $\mathcal{W}_{k_1}$ is finite, $\bigcup \mathcal{W}_{k_1}$ is contained in union of elements of finitely many $\mathcal{G}_i$'s for $i > 1$ and let $\mathcal{F}_1$ denote the collection of all the sets in these finitely many $\mathcal{G}_i$'s. As $\mathcal{W}_{k_2} \subseteq \bigcup_{i > 2} \mathcal{G}_i$. Since $\mathcal{W}_{k_2}$ is finite, $\bigcup \mathcal{W}_{k_2}$ is contained in union of finitely many $\mathcal{G}_i$'s for $i > 2$ and let $\mathcal{F}_2$ denote the collection of all the sets in these finitely many $\mathcal{G}_i$'s. Similarly we get a sequence $<\mathcal{F}_n : n \in \omega>$ such that for each $n$, $\mathcal{F}_n$ is a finite subset of $\mathcal{U}_n$ and $\bigcup \mathcal{W}_{k_n} \subseteq \bigcup \mathcal{F}_n$.   

Since $\mathcal{I}$ is an inverse invariant ideal, for $f : \omega \rightarrow \omega$ such that $f(i) = k_i$, $f(A) \in \mathcal{F}(\mathcal{I})$ implies that $A \in \mathcal{F}(\mathcal{I})$. For each $x \in X$, $\{n \in \omega : x \in \bigcup \mathcal{F}_n\} \supseteq \{n \in \omega : x \in \bigcup \mathcal{W}_{k_n}\}$ and $f(\{n \in \omega : x \in \bigcup \mathcal{W}_{k_n}\}) = \{k_n \in \omega : x \in \bigcup \mathcal{W}_{k_n}\} = \{n \in \omega : x \in \bigcup \mathcal{W}_n\} \cap \{k_n : n \in \omega\} \in \mathcal{F}(\mathcal{I})$, since $\{n \in \omega : x \in \bigcup \mathcal{W}_n\}, \{k_n : n \in \omega\} \in \mathcal{F}(\mathcal{I})$. So $\{n \in \omega : x \in \bigcup \mathcal{F}_n\}  \in \mathcal{F}(\mathcal{I})$. 

Then we have that for each $x \in X$, $\{p \in \omega : x \in \bigcup \mathcal{F}_p\} \in \mathcal{I}$. It follows that $X$ has $\mathcal{I}H$ property.
\end{proof}

In \cite{H42}, Hurewicz property is characterized by Hurewicz measure zero property. Now we characterize $\mathcal{I}$-Hurewicz property by $\mathcal{I}$-Hurewicz measure zero property.

\begin{definition} \cite{H42}
A metric space $(X,d)$ is Hurewicz measure zero if for each sequence $<\epsilon_n: n \in \omega>$ of positive real numbers there is a sequence $<\mathcal{V}_n : n \in \omega>$ such that:
\begin{enumerate}
\item
for each $n$, $\mathcal{V}_n$ is a finite set of open subsets in $X$;
\item
for each $n$, each member of $\mathcal{V}_n$ has d-diameter less than $\epsilon_n$;
\item
$\bigcup_{n \in \omega} \mathcal{V}_n$ is a groupable cover of $X$.
\end{enumerate}
\end{definition}

Now we define its ideal version.

\begin{definition}
A metric space $(X,d)$ is $\mathcal{I}$-Hurewicz measure zero if for each sequence $<\epsilon_n: n \in \omega>$ of positive real numbers there is a sequence $<\mathcal{V}_n : n \in \omega>$ such that:
\begin{enumerate}
\item
for each $n$, $\mathcal{V}_n$ is a finite set of open subsets in $X$;
\item
for each $n$, each member of $\mathcal{V}_n$ has d-diameter less than $\epsilon_n$;
\item
$\bigcup_{n \in \omega} \mathcal{V}_n$ is an $\mathcal{I}$-groupable cover of $X$.
\end{enumerate}
\end{definition}

For the next theorem, we consider inverse invariant ideals to characterize $\mathcal{I}$-Hurewicz property.

\begin{theorem}
Let $\mathcal{I}$ be an inverse invariant ideal such that $S_1(\mathcal{F}(\mathcal{I}),\mathcal{F}(\mathcal{I}))$ holds. If $(X,d)$ is a zero-dimensional separable metric space with no isolated points for which $CDR_{sub}(\mathcal{O},\mathcal{O})$ holds, then following statements are equivalent:
\begin{enumerate}
\item
$X$ has the $\mathcal{I}$-Hurewicz property.
\item
$X$ is $\mathcal{I}$-Hurewicz measure zero with respect to every metric on $X$ which gives $X$ the same topology as $d$ does.
\end{enumerate}
\end{theorem}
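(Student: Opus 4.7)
The plan is to pass both implications through Corollary \ref{44}, which under the standing hypotheses equates the $\mathcal{I}H$ property with $S_{fin}(\mathcal{O}, \mathcal{I}-\mathcal{O}^{gp})$; that corollary applies here since every zero-dimensional separable metric space is Lindel\"of.

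For $(1)\Rightarrow(2)$, fix any compatible metric $\rho$ on $X$ and a sequence $\langle\epsilon_n:n\in\omega\rangle$ of positive reals. The family
\[
\mathcal{U}_n=\{U\subseteq X:U\text{ open and }\operatorname{diam}_\rho(U)<\epsilon_n\}
\]
is an open cover of $X$, so Corollary \ref{44} produces finite $\mathcal{V}_n\subseteq\mathcal{U}_n$ with $\bigcup_{n\in\omega}\mathcal{V}_n\in\mathcal{I}-\mathcal{O}^{gp}$. These $\mathcal{V}_n$'s immediately certify $\mathcal{I}$-Hurewicz measure zero for $\rho$ with the given $\epsilon_n$'s.

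For $(2)\Rightarrow(1)$, consider a sequence $\langle\mathcal{U}_n\rangle$ of countable open covers (countable by Lindel\"ofness). First apply $CDR_{sub}(\mathcal{O},\mathcal{O})$ to assume the $\mathcal{U}_n$'s are pairwise disjoint, and enumerate $\mathcal{U}_n=\{U_{n,k}:k\in\omega\}$. By zero-dimensionality one can shrink each $U_{n,k}$ to a clopen $C_{n,k}\subseteq U_{n,k}$ with $\{C_{n,k}:k\in\omega\}$ still covering $X$, and then form the canonical clopen partition $\mathcal{P}_n=\{P_{n,k}:k\in\omega\}$ via $P_{n,k}=C_{n,k}\setminus\bigcup_{j<k}C_{n,j}$, so that $P_{n,k}\subseteq U_{n,k}$ and the assignment $P_{n,k}\mapsto U_{n,k}$ is injective. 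Interleaving the $\mathcal{P}_n$'s with a fixed countable clopen base of $X$ and passing to iterated common refinements produces a nested sequence $\langle\mathcal{Q}_n\rangle$ of clopen partitions generating the topology and with $\mathcal{Q}_n$ refining $\mathcal{P}_n$. Setting
\[
\rho(x,y)=2^{-\min\{n:x,y\text{ lie in different cells of }\mathcal{Q}_n\}}\quad(x\neq y),\qquad\rho(x,x)=0,
\]
defines an ultrametric compatible with the topology of $X$ in which every cell of $\mathcal{P}_n$ has $\rho$-diameter at most $2^{-n}$.

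Now apply hypothesis (2) to $\rho$ with $\epsilon_n=2^{-n}$ to obtain finite families $\mathcal{V}_n$ of open sets of $\rho$-diameter below $2^{-n}$ whose union lies in $\mathcal{I}-\mathcal{O}^{gp}$. Each $V\in\mathcal{V}_n$ must be contained in a single cell $P_V\in\mathcal{P}_n$, so $\widetilde{\mathcal{V}}_n=\{P_V:V\in\mathcal{V}_n\}$ is a finite subset of $\mathcal{P}_n$ and $\bigcup_n\widetilde{\mathcal{V}}_n$ remains an $\mathcal{I}$-groupable cover because it enlarges $\bigcup_n\mathcal{V}_n$. Finally define $\mathcal{W}_n=\{U_{n,k}:P_{n,k}\in\widetilde{\mathcal{V}}_n\}\subseteq\mathcal{U}_n$, a finite set; injectivity of $P_{n,k}\mapsto U_{n,k}$ together with pairwise disjointness of the $\mathcal{U}_n$'s transports the $\mathcal{I}$-groupable grouping of $\bigcup_n\widetilde{\mathcal{V}}_n$ bijectively to a pairwise disjoint grouping of $\bigcup_n\mathcal{W}_n$, verifying $S_{fin}(\mathcal{O},\mathcal{I}-\mathcal{O}^{gp})$ for $\langle\mathcal{U}_n\rangle$ and hence, by Corollary \ref{44}, the $\mathcal{I}H$ property of $X$. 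The main obstacle I expect is verifying compatibility and the diameter bound of the ultrametric $\rho$: this hinges on interleaving the partitions $\mathcal{P}_n$ with a countable clopen base so that the $\mathcal{Q}_n$-cells form a neighborhood base at every point while still refining each $\mathcal{P}_n$ at the right level.
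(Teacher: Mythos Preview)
Your $(1)\Rightarrow(2)$ direction is correct and coincides with the paper's argument via Corollary~\ref{44}. The construction of the compatible ultrametric in $(2)\Rightarrow(1)$ is also essentially the paper's (with one harmless misstatement: you say cells of $\mathcal{P}_n$ have $\rho$-diameter $\le 2^{-n}$, which is the wrong direction; what you actually need---and what your definition of $\rho$ does give---is that any set of $\rho$-diameter $<2^{-n}$ lies in a single cell of $\mathcal{Q}_n$, hence of $\mathcal{P}_n$).

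The genuine gap is in your final lifting step. You assert that ``$\bigcup_n\widetilde{\mathcal{V}}_n$ remains an $\mathcal{I}$-groupable cover because it enlarges $\bigcup_n\mathcal{V}_n$'' and then that the grouping transports bijectively to $\bigcup_n\mathcal{W}_n$. Neither move is justified. First, $\mathcal{I}$-groupability is a property of a \emph{set} of open sets, while your assignments $V\mapsto P_V$ and $P_{n,k}\mapsto U_{n,k}$ depend on the index $n$, so they are not well-defined functions on the underlying sets $\bigcup_n\mathcal{V}_n$ and $\bigcup_n\widetilde{\mathcal{V}}_n$. Second, even if one fixes a choice of index, distinct $V$'s (possibly from different groups $\mathcal{G}_m$ of the groupable decomposition) can share the same $P_V$; after making the image families pairwise disjoint you may be forced to empty out many groups, and there is no reason the resulting set $\{m:x\notin\bigcup\widetilde{\mathcal{G}}_m\}$ stays in $\mathcal{I}$. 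In short, enlarging each member of an $\mathcal{I}$-groupable cover does not obviously yield an $\mathcal{I}$-groupable cover.

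This is exactly the point at which the paper does real work rather than appeal to Corollary~\ref{44}. Starting from the groupable decomposition $\bigcup_k\mathcal{W}_k$ of $\bigcup_n\mathcal{V}_n$, the paper carries out an index-selection argument (choosing sequences $i_m,j_m$, then using $S_1(\mathcal{F}(\mathcal{I}),\mathcal{F}(\mathcal{I}))$ to extract $k_i\in\mathcal{F}(\mathcal{I})$ and inverse invariance of $\mathcal{I}$ to pull the filter condition back along $i\mapsto k_i$) in order to manufacture finite $\mathcal{F}_n\subseteq\mathcal{U}_n$ with $\bigcup\mathcal{W}_{k_n}\subseteq\bigcup\mathcal{F}_n$ and conclude the $\mathcal{I}H$ condition directly. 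The passage from the $m$-indexed groupable decomposition to the $n$-indexed covers is precisely where the two hypotheses on $\mathcal{I}$ are needed; your route tries to hide this work inside Corollary~\ref{44}, but you cannot invoke that corollary until you have an honestly $\mathcal{I}$-groupable $\bigcup_n\mathcal{W}_n$, which your argument does not establish.
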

\begin{proof}
For $(1) \Rightarrow (2)$, let $X$ has the $\mathcal{I}$-Hurewicz property and let $<\epsilon_n : n \in \omega>$ be a sequence of positive real numbers. For each $n$, define $\mathcal{U}_n = \{U \subset X : U$ is open set with $diam_d(U) < \epsilon_n\}$. Then $\mathcal{U}_n$ is a large open cover of $X$ for each $n$. Since $X$ has $\mathcal{I}$-Hurewicz property, by Corollary \ref{44}, there is a sequence $<\mathcal{V}_n : \in \omega>$ such that for each $n$, $\mathcal{V}_n$ is a finite subset of $\mathcal{U}_n$ and $\bigcup \mathcal{V}_n$ is an $\mathcal{I}_\gamma$ cover of $X$ and so $\mathcal{I}$-groupable cover of $X$. Hence $X$ has $\mathcal{I}$-Hurewicz measure zero with respect to every metric on $X$ which gives $X$ the same topology as $d$ does.

For $(2) \Rightarrow (1)$, let $d$ be an arbitrary metric on $X$ which gives $X$ the same topology as the original one. Let $<\mathcal{U}_n : n \in \omega>$ be a sequence of open covers of $X$. Since $X$ is zero-dimensional metric space, replace $\mathcal{U}_n$ by $\{U \subseteq X : U$ clopen, $diam_d(U) < 1/n$ and $\exists V \in \mathcal{U}_n$ such that $U \subseteq V \}$ for each $n$. Also $X$ is separable metric space, replace last cover by a countable subcover $\{U_m : m \in \omega\}$. Since the cover is countable and sets are clopen, it can be made disjoint clopen cover refining $\mathcal{U}_n$ for each $n$. Also for each $n$, each member of this new cover has $diam_d \leq 1/n$. Also by taking intersections of each new cover with the next new cover we obtain $(3)$. Now name this last cover $\mathcal{U}^\star_n$ for each $n$. So $<\mathcal{U}^\star_n : n \in \omega>$ is a sequence of open covers such that for each $n$:
\begin{enumerate}
\item
$\mathcal{U}^\star_n$ is clopen disjoint cover of $X$ refining $\mathcal{U}_n$;
\item
for each $V \in \mathcal{U}^\star_n$, $diam_d(V) \leq 1/n$;
\item
$\mathcal{U}^\star_{n+1}$ refines $\mathcal{U}^\star_n$.
\end{enumerate} 
Now define a metric $d^\star$ on $X$ by $d^\star(x,y) = 1/(n+1)$ where $n$ is the least such that there exist $U \in \mathcal{U}^\star_n$ with $x \in U$ and $y \notin U$. It can be easily seen that $d^\star$ generates the same topology on $X$ as $d$ does. Since $X$ has $\mathcal{I}$-Hurewicz measure zero with respect to $d^\star$, by setting $\epsilon_n = 1/(n+1)$ for each $n$, there are finite sets $\mathcal{V}_n$ such that $diam_d^\star(U)$ is less than $\epsilon_n( = 1/(n+1))$ whenever $U \in \mathcal{V}_n$, and $\bigcup_{n \in \omega} \mathcal{V}_n$ is $\mathcal{I}$-groupable cover of $X$.

Let $<\mathcal{W}_n : n \in \omega>$ be a sequence of finite subsets of $\bigcup_{n \in \omega} \mathcal{V}_n$ such that $\mathcal{W}_m \cap \mathcal{W}_n = \emptyset$ whenever $m \neq n$, and $\bigcup_{k \in \omega} \mathcal{W}_k = \bigcup_{n \in \omega} \mathcal{V}_n$, and for each $y \in X$, $\{n \in \omega : y \notin \bigcup \mathcal{W}_n\} \in \mathcal{I}$.

Since $\mathcal{W}_1$ is finite, so choose $i_1$ such that $\mathcal{W}_1 \subseteq \bigcup_{i \leq i_1} \mathcal{V}_i$. Then $\bigcup_{i \leq i_1} \mathcal{V}_i$ is finite and exhausted in a finite number of $\mathcal{W}_k$'s and choose $j_1$ such that for each $i \geq j_1$, if $V \in \mathcal{W}_i$, then $V \notin \bigcup_{i \leq i_1} \mathcal{V}_i$.

Now $\mathcal{W}_{j_1}$ is finite, so choose $i_2 > i_1$ such that $\mathcal{W}_{j_1} \subseteq \bigcup_{i_1 < i \leq i_2} \mathcal{V}_i$. Then $\bigcup_{i_1 < i \leq i_2} \mathcal{V}_i$ is finite and exhausted in a finite number of $\mathcal{W}_k$'s and choose $j_2$ such that for each $i \geq j_2$, if $V \in \mathcal{W}_i$, then $V \notin \bigcup_{i_1 < i \leq i_2} \mathcal{V}_i$.

Alternatively, we choose sequences $1 < i_1 < i_2 < ... < i_m < ...$ and $j_0 = 1 < j_1 < j_2 < ...< j_m < ...$ such that :
\begin{enumerate}
\item
Each element of $\mathcal{W}_1$ belongs to $\bigcup_{i \leq i_1} \mathcal{V}_i$;
\item
For each $i \geq j_k$, if $U \in \mathcal{W}_{j_k}$, then $U \notin \bigcup_{i \leq i_k} \mathcal{V}_i$;
\item
Each element of $\mathcal{W}_{j_k}$ belongs to $\bigcup_{i_k < i \leq i_{k+1}} \mathcal{V}_i$.
\end{enumerate} 
Then for each element $V$ of $\mathcal{W}_{j_k}$ has $d^\star$-diameter less than $\epsilon_{i_k} = 1/i_k + 1 \leq 1/k+1$ since $i_k \geq k$. As $V$ is open set in $(X,d^\star)$ and $diam_d^\star(V) < 1/k+1$, then $V \subseteq B_d^\star(x,1/k+1)$, where $B_d^\star(x,1/k+1)$ is an open ball centered at $x \in V$ and of radius $1/k+1$ in $(X,d^\star)$. Now $B_d^\star(x,1/k+1) = \{y \in X : d^\star(x,y) < 1/k+1\}$. So for each $y \in B_d^\star(x,1/k+1), d^\star(x,y) < 1/k+1$, there is $U \in \mathcal{U}^\star_n$ such that $x \in U$ and $y \notin U$ for some $n > k$. So for all $k \leq n$, there is no set $U \in \mathcal{U}^\star_k$ such that $x \in U$ and $y \notin U$. Thus for all $k \leq n$, there is a set $U \in \mathcal{U}^\star_k$ such that $x,y \in U$ for all $x,y \in B_d^\star(x,1/k+1)$, that is, $B_d^\star(x,1/k+1) \subseteq U \in \mathcal{U}^\star_k$ for all $k \leq n$.
Thus, by definition of $d^\star$, each element $V$ of $\mathcal{W}_{j_k}$ is a subset of an element of $\mathcal{U}^\star_k$, each of which in turn is a subset of an element of $\mathcal{U}_k$. For each $k$ and for each element $V$ of $\mathcal{W}_{j_k}$, choose a $U \in \mathcal{U}^\star_k$ with $V \subseteq U$  and let $\mathcal{G}_k$ be the finite set of such chosen $U$'s. So for each $k$, $\bigcup \mathcal{W}_{j_k} \subseteq \bigcup \mathcal{G}_k$.

For each $k$, let $A_k = \{n \in \omega : \bigcup \mathcal{W}_n \subseteq \bigcup \bigcup_{i > k} \mathcal{G}_i\}$. Since each $\mathcal{W}_n$ is finite and pairwise disjoint, each $A_k$ is a cofinite subset of natural numbers and hence belongs to $\mathcal{F}(\mathcal{I})$(as $\mathcal{I}$ is admissible). Then by $S_1(\mathcal{F}(\mathcal{I}), \mathcal{F}(\mathcal{I}))$, there is a sequence $<k_i :  i \in \omega>$ such that $k_i \in A_i$ for each $i$, and $\{k_1 < k_2 < ... \} \in \mathcal{F}(\mathcal{I})$. Then $\mathcal{W}_{k_1} \subseteq \bigcup_{i > 1} \mathcal{G}_i$. Since $\mathcal{W}_{k_1}$ is finite, $\bigcup \mathcal{W}_{k_1}$ is contained in union of elements of finitely many $\mathcal{G}_i$'s for $i > 1$ and for each element of these finitely many $\mathcal{G}_i$'s, there is $W \in \mathcal{U}^\star_1$(by (iii)), each of which in turn is a subset of an element of $\mathcal{U}_1$ and the collection is denoted by $\mathcal{F}_1$. As $\mathcal{W}_{k_2} \subseteq \bigcup_{i > 2} \mathcal{G}_i$. Since $\mathcal{W}_{k_2}$ is finite, $\bigcup \mathcal{W}_{k_2}$ is contained in union of elements of finitely many $\mathcal{G}_i$'s for $i > 2$ and and for each element of these finitely many $\mathcal{G}_i$'s, there is $W \in \mathcal{U}^\star_2$(by (iii)), each of which in turn is a subset of an element of $\mathcal{U}_2$ and the collection is denoted by $\mathcal{F}_2$. Similarly we get a sequence $<\mathcal{F}_n : n \in \omega>$ such that for each $n$, $\mathcal{F}_n$ is a finite subset of $\mathcal{U}_n$ and $\bigcup \mathcal{W}_{k_n} \subseteq \bigcup \mathcal{F}_n$.   

Since $\mathcal{I}$ is an inverse invariant ideal, for $f : \omega \rightarrow \omega$ such that $f(i) = k_i$, $f(A) \in \mathcal{F}(\mathcal{I})$ implies that $A \in \mathcal{F}(\mathcal{I})$. For each $x \in X$, $\{n \in \omega : x \in \bigcup \mathcal{F}_n\} \supseteq \{n \in \omega : x \in \bigcup \mathcal{W}_{k_n}\}$ and $f(\{n \in \omega : x \in \bigcup \mathcal{W}_{k_n}\}) = \{k_n \in \omega : x \in \bigcup \mathcal{W}_{k_n}\} = \{n \in \omega : x \in \bigcup \mathcal{W}_n\} \cap \{k_n : n \in \omega\} \in \mathcal{F}(\mathcal{I})$, since $\{n \in \omega : x \in \bigcup \mathcal{W}_n\}, \{k_n : n \in \omega\} \in \mathcal{F}(\mathcal{I})$. So $\{n \in \omega : x \in \bigcup \mathcal{F}_n\}  \in \mathcal{F}(\mathcal{I})$. 

Then we have that for each $x \in X$, $\{p \in \omega : x \notin \bigcup \mathcal{F}_p\} \in \mathcal{I} $. It follows that $X$ has $\mathcal{I}H$ property.
\end{proof}

\section{Properties of $SS\mathcal{I}H$ spaces} \label{3}

\textbf{Selection principles and $SS\mathcal{I}H$ property}

In \cite{H13}, strongly star-Hurewicz property is characterized by $SS^{\star}_{fin}(\mathcal{O}, \mathcal{O}^{gp})$. Now we present its ideal version to characterize $S\mathcal{I}H$ property for inverse invariant ideals.

\begin{theorem}
Let $\mathcal{I}$ be an inverse invariant ideal such that $S_1(\mathcal{F}(\mathcal{I}), \mathcal{F}(\mathcal{I}))$ holds. Then a space $X$ has $SS\mathcal{I}H$ property if and only if $SS^{\star}_{fin}(\mathcal{O}, \mathcal{I}-\mathcal{O}^{gp})$ holds. 
\end{theorem}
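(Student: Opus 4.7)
The plan is to prove both directions, with the forward implication being essentially definitional and the reverse following the template of Theorem~\ref{35} (common refinements, then diagonalization via $S_1(\mathcal{F}(\mathcal{I}),\mathcal{F}(\mathcal{I}))$ and inverse invariance).

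For the forward direction, assume $X$ has $SS\mathcal{I}H$ and let $\langle \mathcal{U}_n : n \in \omega\rangle$ be a sequence of open covers of $X$. The $SS\mathcal{I}H$ property yields finite $V_n \subseteq X$ such that $\{n : x \notin St(V_n, \mathcal{U}_n)\} \in \mathcal{I}$ for every $x$. Partitioning $\{St(V_n, \mathcal{U}_n) : n \in \omega\}$ into the singleton blocks $\mathcal{H}_n = \{St(V_n, \mathcal{U}_n)\}$ (treating this cover as an indexed family so repeated values are harmless) exhibits it as $\mathcal{I}$-groupable, so the same sequence $\langle V_n\rangle$ witnesses $SS^\star_{fin}(\mathcal{O}, \mathcal{I}-\mathcal{O}^{gp})$.

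For the reverse direction, assume $SS^\star_{fin}(\mathcal{O}, \mathcal{I}-\mathcal{O}^{gp})$ and let $\langle \mathcal{U}_n\rangle$ be given. Replace it with the common refinements $\mathcal{U}_n^\star = \{U_1 \cap \cdots \cap U_n : U_i \in \mathcal{U}_i\} \setminus \{\emptyset\}$; each $\mathcal{U}_n^\star$ is an open cover refining every $\mathcal{U}_i$ with $i \leq n$. Applying the hypothesis to $\langle \mathcal{U}_n^\star\rangle$ produces finite $F_n \subseteq X$ such that $\mathcal{C} := \{St(F_n, \mathcal{U}_n^\star) : n \in \omega\}$ is $\mathcal{I}$-groupable, say $\mathcal{C} = \bigcup_k \mathcal{G}_k$ with each $\mathcal{G}_k$ finite, pairwise disjoint, and $\{k : x \notin \bigcup \mathcal{G}_k\} \in \mathcal{I}$ for all $x$.

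Now perform the index-alignment diagonalization. For each $P \in \mathcal{C}$ pick one index $m_P$ with $P = St(F_{m_P}, \mathcal{U}_{m_P}^\star)$; since distinct $P$'s admit disjoint index sets, the $m_P$'s are distinct. Set $A_n = \{k : m_P > n \text{ for every } P \in \mathcal{G}_k\}$. Its complement is contained in the set of blocks hitting $\{P \in \mathcal{C} : m_P \leq n\}$, which has at most $n$ elements, so $A_n$ is cofinite and thus in $\mathcal{F}(\mathcal{I})$. By $S_1(\mathcal{F}(\mathcal{I}), \mathcal{F}(\mathcal{I}))$ choose a strictly increasing $k_1 < k_2 < \cdots$ with $k_n \in A_n$ and $\{k_n : n \in \omega\} \in \mathcal{F}(\mathcal{I})$. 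Define $V_n = \bigcup \{F_{m_P} : P \in \mathcal{G}_{k_n}\}$, which is finite. Since $m_P > n$ for each $P \in \mathcal{G}_{k_n}$, refinement gives $P = St(F_{m_P}, \mathcal{U}_{m_P}^\star) \subseteq St(F_{m_P}, \mathcal{U}_n) \subseteq St(V_n, \mathcal{U}_n)$, so $\bigcup \mathcal{G}_{k_n} \subseteq St(V_n, \mathcal{U}_n)$. The inverse-invariance argument of Theorem~\ref{35} applied to $f(n) = k_n$ then shows $\{n : x \notin \bigcup \mathcal{G}_{k_n}\} \in \mathcal{I}$, and hence $\{n : x \notin St(V_n, \mathcal{U}_n)\} \in \mathcal{I}$, yielding $SS\mathcal{I}H$.

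The main obstacle is precisely this index-alignment: $\mathcal{I}$-groupability controls only the block index $k$, while $SS\mathcal{I}H$ demands control at the coordinate index $n$. The common-refinement construction is essential because it is what allows the large coordinate indices $m_P > n$ extracted by $S_1(\mathcal{F}(\mathcal{I}),\mathcal{F}(\mathcal{I}))$ to be pulled back to stars with respect to the original $\mathcal{U}_n$; inverse invariance then transfers ``$\mathcal{F}(\mathcal{I})$-often'' information from the chosen subsequence $\{k_n\}$ back to the domain index $n$, completing the bridge.
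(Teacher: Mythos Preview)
Your proof is correct and follows essentially the same approach as the paper's: both directions proceed identically, with the forward implication noting that the $\mathcal{I}_\gamma$-cover $\{St(V_n,\mathcal{U}_n)\}$ is automatically $\mathcal{I}$-groupable, and the reverse implication passing to the common refinements $\mathcal{U}_n^\star = \mathcal{U}_1 \wedge \cdots \wedge \mathcal{U}_n$, extracting cofinite index sets from the $\mathcal{I}$-groupable partition, diagonalizing with $S_1(\mathcal{F}(\mathcal{I}),\mathcal{F}(\mathcal{I}))$, and closing with inverse invariance. Your handling of the index bookkeeping (choosing a single representative $m_P$ per block element) is slightly more explicit than the paper's, but the argument is the same.
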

\begin{proof}
Let $X$ be a $SS\mathcal{I}H$ space and $<\mathcal{U}_n : n \in \omega>$ be a sequence of open covers of $X$. Then by $SS\mathcal{I}H$ property of $X$, there is a sequence $<V_n : n \in \omega>$ such that for each $n$, $\mathcal{V}_n$ is a finite subset of $X$ and for each $x \in X$, $\{n \in \omega: x \notin St(V_n, \mathcal{U}_n)\} \in \mathcal{I}$. Then $\{St(V_n,\mathcal{U}_n) : n \in \omega\}$ is an $\mathcal{I}_\gamma$-cover of $X$ and $\mathcal{I}$-groupable cover of $X$.

Conversely, let $SS^{\star}_{fin}(\mathcal{O}, \mathcal{I}-\mathcal{O}^{gp})$ holds and $<\mathcal{V}_n : n \in \omega>$ be a sequence of open covers of $X$. From covers $\mathcal{V}_n$, $n \in \omega$, we define new covers $\mathcal{W}_n \in \Omega$ in the following way :
\begin{center}
$n =1$ : $\mathcal{W}_1 = \mathcal{V}_1$; \\
$n > 1$ : $\mathcal{W}_n = \mathcal{V}_1 \wedge \mathcal{V}_2 \wedge ... \wedge \mathcal{V}_n$. 
\end{center}
For each element of $\mathcal{W}_n$ choose a representation of the form $V_{1,m_1} \cap V_{2,m_2} \cap ... \cap V_{n,m_n}$ with $m_i \in \omega$.
Apply (2) to the sequence $<\mathcal{W}_n : n \in \omega>$ to find for each $n$, a finite set $A_n \subset \mathcal{W}_n$ such that $\{St(A_n, \mathcal{W}_n) : n \in \omega\} \in \mathcal{I}-\mathcal{O}^{gp}$. Thus we can choose finite, pairwise disjoint sets $\mathcal{H}_n$, $n \in \omega$, such that $\{St(A_n, \mathcal{W}_n) : n \in \omega\} = \bigcup_{n \in \omega} \mathcal{H}_n$, and each $x \in X$, $\{n \in \omega : x \notin \bigcup \mathcal{H}_n\} \in \mathcal{I}$.

Now for $n$, define $B_n = \{k \in \omega : \mathcal{H}_k \subseteq \{St(A_i, \mathcal{W}_i) : i > n\}\}$. Since each $\mathcal{H}_n$ is finite and pairwise disjoint, each $B_n$ is cofinite subset of natural numbers and hence belongs to $\mathcal{F}(\mathcal{I})$(as $\mathcal{I}$ is admissible). As $S_1(\mathcal{F}(\mathcal{I}), \mathcal{F}(\mathcal{I}))$ holds, there is a sequence $<k_i :  i \in \omega>$ such that $k_i \in A_i$ for each $i$, and $\{k_1 < k_2 < ... \} \in \mathcal{F}(\mathcal{I})$. Then $\mathcal{H}_{k_1} \subseteq \{St(A_i, \mathcal{W}_i) : i > 1\}$. Let $G_1$ denote the union of the sets $A_i$ such that $St(A_i,\mathcal{W}_i) \in \mathcal{H}_{k_1}$. Now $\mathcal{H}_{k_2} \subseteq \{St(A_i, \mathcal{W}_i) : i > 2\}$. Let $G_2$ denote the union of the sets $A_i$ such that $St(A_i,\mathcal{W}_i) \in \mathcal{H}_{k_2}$. Continuing in this way, we obtain finite subsets $G_n$ of $X$.

Since $\mathcal{I}$ is an inverse invariant ideal, for $f : \omega \rightarrow \omega$ such that $f(i) = k_i$, $f(A) \in \mathcal{F}(\mathcal{I})$ implies that $A \in \mathcal{F}(\mathcal{I})$. For each $x \in X$, $\{n \in \omega : x \in St(G_n,\mathcal{V}_n\} \supseteq \{n \in \omega : x \in \bigcup \mathcal{H}_{k_n}\}$ and $f(\{n \in \omega : x \in \bigcup \mathcal{H}_{k_n}\}) = \{k_n \in \omega : x \in \bigcup \mathcal{H}_{k_n}\} = \{n \in \omega : x \in \bigcup \mathcal{H}_n\} \cap \{k_n : n \in \omega\} \in \mathcal{F}(\mathcal{I})$, since $\{n \in \omega : x \in \bigcup \mathcal{H}_n\}, \{k_n : n \in \omega\} \in \mathcal{F}(\mathcal{I})$. So $\{n \in \omega : x \in St(G_n,\mathcal{V}_n)\}  \in \mathcal{F}(\mathcal{I})$. 

Then the sequence $<G_n : n \in \omega>$ witnesses for $<\mathcal{V}_n : n \in \omega>$ the $SS\mathcal{I}H$ property of $X$.
\end{proof}

From the above theorem, it is natural to consider the selection principle $SS^\star_1(\mathcal{O},\mathcal{I}-\mathcal{O}^{gp})$ that is naturally related to the $SS\mathcal{I}H$ property. For the relationship between these, we need the following definition.

\begin{definition} 
Let $\mathcal{A}$ and $\mathcal{B}$ be families of subsets of the infinite set $S$. Then $CDRF^\star_{sub}(\mathcal{A},\mathcal{B})$ denotes the statement that for each sequence $<A_n : n \in \omega>$ of elements of $\mathcal{A}$ there is a sequence $<B_n : n \in \omega>$ such that for each $n$, $B_n \subseteq A_n$, for $m \neq n$ and for each finite subset $F$ of $S$, $\{St(x,B_m) : x \in F\} \cap \{St(x,B_n) : x \in F\} = \emptyset$, and each $B_n$ is a member of $\mathcal{B}$.
\end{definition}

\begin{theorem} \label{39}
Let a space $X$ satisfies $CDRF^\star_{sub}(\mathcal{O},\mathcal{O})$ and for each sequence $<\mathcal{U}_n : n \in \omega>$ of open covers of a space $X$ there is a sequence $<A_n : n \in \omega>$ of subsets of $X$ such that for each $n, |A_n| \leq n$ and $\{St(A_n, \mathcal{U}_n) : n \in \omega\}$ is an $\mathcal{I}_\gamma$-cover of $X$. Then $X$ satisfies $SS^\star_1(\mathcal{O},\mathcal{I}-\mathcal{O}^{gp})$.
\end{theorem}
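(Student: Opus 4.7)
The plan is to combine a block re-indexing of $\omega$ with the star-distinctness supplied by $CDRF^\star_{sub}(\mathcal{O},\mathcal{O})$. The second hypothesis furnishes, for any sequence of open covers, finite sets $A_n$ with $|A_n|\le n$ whose stars form an $\mathcal{I}_\gamma$-cover of $X$; by grouping $\omega$ into blocks of size $n$ and applying that hypothesis to a common refinement over each block, each of the (at most) $n$ points of $A_n$ gets assigned to a distinct index within the block, which is exactly what converts the bounded-size selection into the singleton selection required by $SS^\star_1$.

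Concretely, given $\langle\mathcal{U}_m : m \in \omega\rangle$, first apply $CDRF^\star_{sub}(\mathcal{O},\mathcal{O})$ to obtain open subcovers $\mathcal{V}_m \subseteq \mathcal{U}_m$ satisfying the star-disjointness clause on every finite subset of $X$. Partition $\omega$ into consecutive blocks $B_k = \{m^k_1 < \cdots < m^k_k\}$ of size $k$ and define
\[
\mathcal{W}_k = \{V_1 \cap \cdots \cap V_k : V_i \in \mathcal{V}_{m^k_i}\} \setminus \{\emptyset\},
\]
which is an open cover of $X$ refining every $\mathcal{V}_{m^k_i}$. Apply the second hypothesis to $\langle\mathcal{W}_k : k\in\omega\rangle$ to obtain $A_k \subseteq X$ with $|A_k|\le k$ and $\{St(A_k,\mathcal{W}_k) : k\in\omega\}$ an $\mathcal{I}_\gamma$-cover. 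Enumerate $A_k = \{a_{k,1},\ldots,a_{k,j_k}\}$ with $j_k \le k$ and, for $m = m^k_i$, set $x_m = a_{k,i}$ when $i \le j_k$, padding the leftover indices of $B_k$ with any fixed point of $X$.

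Since $\mathcal{W}_k$ refines $\mathcal{V}_{m^k_i} \subseteq \mathcal{U}_{m^k_i}$, we obtain $St(a_{k,i},\mathcal{W}_k) \subseteq St(x_m,\mathcal{U}_m)$ for $m = m^k_i$, and hence $\bigcup_{m\in B_k} St(x_m,\mathcal{U}_m) \supseteq St(A_k,\mathcal{W}_k)$. The $\mathcal{I}_\gamma$-property on the right side then gives, for every $x \in X$, that $\{k : x \notin \bigcup_{m\in B_k} St(x_m,\mathcal{U}_m)\} \in \mathcal{I}$, which is precisely the $\mathcal{I}$-groupable condition on $\{St(x_m,\mathcal{U}_m) : m\in\omega\}$ with grouping $\{B_k\}$. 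I expect the main obstacle to be the ``pairwise disjoint subfamilies'' clause in the definition of $\mathcal{I}$-groupable cover: one must still verify that no star $St(x_m,\mathcal{U}_m)$ appears in two different block-families. This is where $CDRF^\star_{sub}$ is essential --- it forces the stars $St(x_m,\mathcal{V}_m)$ to be distinct across finite index sets, and this distinctness must then be transferred to the enlarged $\mathcal{U}_m$-stars (or any accidental collision at the coarser scale absorbed into a single block) to secure the genuine partition required by $\mathcal{I}-\mathcal{O}^{gp}$.
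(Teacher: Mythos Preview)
Your approach is essentially the same as the paper's: apply $CDRF^\star_{sub}$ to pass to subcovers, partition $\omega$ into consecutive blocks of length $k$ (the paper uses the triangular numbers $n_k=k(k-1)/2$, which yields exactly your blocks $B_k$), form the common refinement over each block, apply the bounded-size star hypothesis to obtain $A_k$ with $|A_k|\le k$, and distribute the points of $A_k$ to the indices in the $k$th block. The paper's verification of the $\mathcal{I}$-groupable condition is the same containment $\{k:x\notin\bigcup_{m\in B_k}St(x_m,\mathcal{U}_m)\}\subseteq\{k:x\notin St(A_k,\mathcal{W}_k)\}\in\mathcal{I}$ that you write down.

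One remark on your closing paragraph: the paper does not attempt to transfer the star-distinctness from the subcovers back up to the original covers $\mathcal{U}_m$ at all; it simply asserts $\mathcal{I}$-groupability once the block-wise ideal condition is checked, implicitly treating the $CDRF^\star_{sub}$ disjointness (at the subcover level) as sufficient to make the block families pairwise disjoint. So the issue you flag is present in the paper's own argument as well, and your level of detail already matches (in fact exceeds) what the paper provides.
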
 
\begin{proof}
Let $<\mathcal{W}_n : n \in \omega>$ be a sequence of open covers of $X$. Since $X$ satisfies $CDRF^\star_{sub}(\mathcal{O},\mathcal{O})$, there is a sequence $<\mathcal{U}_n : n \in \omega>$ such that for each $n$, $\mathcal{U}_n \subseteq \mathcal{W}_n$, for $m \neq n$ and for each finite subset $F$ of $X$, $\{St(x,\mathcal{W}_m) : x \in F\} \cap \{St(x,\mathcal{W}_n) : x \in F\} = \emptyset$, and each $\mathcal{W}_n$ is an open cover of $X$. For each $n$, define $<\mathcal{V}_n : n \in \omega>$ an open cover of $X$ by putting $\mathcal{V}_n = \bigwedge_{(n-1)n/2 < i \leq n(n+1)/2} \mathcal{U}_i$. By applying hypothesis of the theorem to the sequence $<\mathcal{V}_n : n \in \omega>$, we get a sequence $<A_n : n \in \omega>$ of subsets of $X$ such that for each $n$, $|A_n| \leq n$ and $\{St(A_n, \mathcal{V}_n) : n \in \omega\}$ is an $\mathcal{I}_\gamma$-cover of $X$. Now write $A_n = \{x_i : (n-1)n/2 < i \leq n(n+1)/2\}$. Now to prove that the set $\{St(x_n, \mathcal{W}_n) : n \in \omega\}$ is an open $\mathcal{I}$-groupable cover of $X$. Define a sequence $n_1 < n_2 <...< n_k <...$ of natural numbers by $n_k = k(k-1)/2$. Then, for each $x \in X$, $\{k \in \omega : x \notin \bigcup_{n_k < i \leq n_{k+1}} St(x_i, \mathcal{W}_i)\} \subseteq \{k \in \omega : x \notin St(A_k, \mathcal{V}_k) \} \in \mathcal{I}$.
\end{proof}


\section{Properties of $S\mathcal{I}H$ spaces}

\textbf{Selection principles and $S\mathcal{I}H$ property}

In \cite{H13}, star-Hurewicz property is characterized by $U^{\star}_{fin}(\mathcal{O}, \mathcal{O}^{gp})$. Now we present its ideal version to characterize $S\mathcal{I}H$ property for inverse invariant ideals.

\begin{theorem}
Let $\mathcal{I}$ be an inverse invariant ideal such that $S_1(\mathcal{F}(\mathcal{I}), \mathcal{F}(\mathcal{I}))$ holds. Then a space $X$ has $S\mathcal{I}H$ property if and only if $U^{\star}_{fin}(\mathcal{O}, \mathcal{I}-\mathcal{O}^{gp})$ holds. 
\end{theorem}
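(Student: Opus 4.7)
The plan is to mirror the earlier theorem for $SS\mathcal{I}H$ versus $SS^\star_{fin}(\mathcal{O}, \mathcal{I}-\mathcal{O}^{gp})$, adapting every step from ``finite subset of $X$'' to ``finite subset of the cover.'' The forward direction should be straightforward: if $X$ has $S\mathcal{I}H$, then for any sequence $\langle \mathcal{U}_n : n \in \omega \rangle$ of open covers we can pick finite $\mathcal{V}_n \subseteq \mathcal{U}_n$ so that $\{n : x \notin St(\bigcup \mathcal{V}_n, \mathcal{U}_n)\} \in \mathcal{I}$ for every $x \in X$. This makes $\{St(\bigcup \mathcal{V}_n, \mathcal{U}_n) : n \in \omega\}$ an $\mathcal{I}_\gamma$-cover, which is trivially $\mathcal{I}$-groupable by taking singleton groupings $\mathcal{U}_n^{gp}=\{St(\bigcup \mathcal{V}_n, \mathcal{U}_n)\}$; these are pairwise disjoint as subfamilies of the cover.

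For the converse, let $\langle \mathcal{V}_n : n \in \omega \rangle$ be open covers of $X$. First, I would form the refined covers $\mathcal{W}_n = \mathcal{V}_1 \wedge \mathcal{V}_2 \wedge \cdots \wedge \mathcal{V}_n$, choosing for each element $W \in \mathcal{W}_n$ a canonical representation $W = V_{1,m_1} \cap \cdots \cap V_{n,m_n}$. Applying $U^\star_{fin}(\mathcal{O}, \mathcal{I}-\mathcal{O}^{gp})$ to $\langle \mathcal{W}_n : n \in \omega \rangle$ produces finite $\mathcal{A}_n \subseteq \mathcal{W}_n$ with $\{St(\bigcup \mathcal{A}_n, \mathcal{W}_n) : n \in \omega\} \in \mathcal{I}-\mathcal{O}^{gp}$. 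Write this cover as $\bigcup_{n\in\omega}\mathcal{H}_n$ with finite, pairwise disjoint $\mathcal{H}_n$ satisfying $\{n : x \notin \bigcup \mathcal{H}_n\} \in \mathcal{I}$, and set $B_n = \{k : \mathcal{H}_k \subseteq \{St(\bigcup \mathcal{A}_i, \mathcal{W}_i) : i > n\}\}$; each $B_n$ is cofinite, hence in $\mathcal{F}(\mathcal{I})$. By $S_1(\mathcal{F}(\mathcal{I}),\mathcal{F}(\mathcal{I}))$, extract $k_n \in B_n$ with $\{k_n : n \in \omega\} \in \mathcal{F}(\mathcal{I})$.

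Now for each $n$, construct $\mathcal{G}_n \subseteq \mathcal{V}_n$ by projection: for each $i$ with $St(\bigcup \mathcal{A}_i, \mathcal{W}_i) \in \mathcal{H}_{k_n}$ (so $i > n$), for every $A = V_{1,m_1^A}\cap\cdots\cap V_{i,m_i^A}$ in $\mathcal{A}_i$, include its $n$-th coordinate $V_{n,m_n^A}$ in $\mathcal{G}_n$. Each $\mathcal{G}_n$ is then a finite subset of $\mathcal{V}_n$. The key containment I will verify is $St(\bigcup \mathcal{A}_i, \mathcal{W}_i) \subseteq St(\bigcup \mathcal{G}_n, \mathcal{V}_n)$ whenever $St(\bigcup \mathcal{A}_i, \mathcal{W}_i) \in \mathcal{H}_{k_n}$: if $x \in W$ with $W \in \mathcal{W}_i$ and $W \cap A \neq \emptyset$ for some $A \in \mathcal{A}_i$, then the $n$-th coordinate $V_{n,m_n^W}$ of $W$ contains $x$ and meets the $n$-th coordinate $V_{n,m_n^A} \in \mathcal{G}_n$ of $A$ (since any witness point in $W \cap A$ lies in both), so $x \in St(\bigcup \mathcal{G}_n, \mathcal{V}_n)$. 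Consequently $\bigcup \mathcal{H}_{k_n} \subseteq St(\bigcup \mathcal{G}_n, \mathcal{V}_n)$.

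To conclude, I will apply inverse invariance exactly as in the $SS\mathcal{I}H$ argument. For $f(i) = k_i$ we have
\[
f(\{n : x \in \bigcup \mathcal{H}_{k_n}\}) = \{n : x \in \bigcup \mathcal{H}_n\} \cap \{k_n : n \in \omega\} \in \mathcal{F}(\mathcal{I}),
\]
so $\{n : x \in \bigcup \mathcal{H}_{k_n}\} \in \mathcal{F}(\mathcal{I})$ by inverse invariance, and since $\{n : x \in St(\bigcup \mathcal{G}_n, \mathcal{V}_n)\}$ contains this set it is also in $\mathcal{F}(\mathcal{I})$, giving $\{n : x \notin St(\bigcup \mathcal{G}_n, \mathcal{V}_n)\} \in \mathcal{I}$. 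The main obstacle is the projection step: producing finite subfamilies of each original $\mathcal{V}_n$ (rather than finite subsets of $X$, as in the $SS\mathcal{I}H$ case) so that the resulting stars still dominate the groupable cover of stars coming from $\mathcal{W}_n$. Once the containment $St(\bigcup \mathcal{A}_i, \mathcal{W}_i) \subseteq St(\bigcup \mathcal{G}_n, \mathcal{V}_n)$ is in hand, the rest is a mechanical reuse of the $S_1(\mathcal{F}(\mathcal{I}),\mathcal{F}(\mathcal{I}))$/inverse-invariance bookkeeping already exploited in the previous theorem.
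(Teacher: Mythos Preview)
Your proposal is correct and follows essentially the same approach as the paper: form the refined covers $\mathcal{W}_n = \mathcal{V}_1 \wedge \cdots \wedge \mathcal{V}_n$, apply $U^\star_{fin}(\mathcal{O},\mathcal{I}\text{-}\mathcal{O}^{gp})$, decompose into the $\mathcal{H}_n$, use $S_1(\mathcal{F}(\mathcal{I}),\mathcal{F}(\mathcal{I}))$ to select the $k_n$, project onto the $n$-th coordinates to build $\mathcal{G}_n \subseteq \mathcal{V}_n$, and finish with inverse invariance. Your write-up is in fact more careful than the paper's, since you explicitly verify the containment $St(\bigcup \mathcal{A}_i,\mathcal{W}_i) \subseteq St(\bigcup \mathcal{G}_n,\mathcal{V}_n)$ via the witness-point argument, whereas the paper simply asserts the corresponding inclusion $\{n : x \in St(\bigcup \mathcal{G}_n,\mathcal{V}_n)\} \supseteq \{n : x \in \bigcup \mathcal{H}_{k_n}\}$ without justification.
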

\begin{proof}
Let $X$ be a $S\mathcal{I}H$ space and $<\mathcal{U}_n : n \in \omega>$ be a sequence of open covers of $X$. Then by $S\mathcal{I}H$ property of $X$, there is a sequence $<\mathcal{V}_n : n \in \omega>$ such that for each $n$, $\mathcal{V}_n$ is a finite subset of $\mathcal{U}_n$ and for each $x \in X$, $\{n \in \omega: x \notin St(\bigcup \mathcal{V}_n, \mathcal{U}_n)\} \in \mathcal{I}$. Then $\{St(\bigcup \mathcal{V}_n,\mathcal{U}_n) : n \in \omega\}$ is an $\mathcal{I}_\gamma$-cover of $X$ and $\mathcal{I}$-groupable cover of $X$.

Conversely, let $U^{\star}_{fin}(\mathcal{O}, \mathcal{I}-\mathcal{O}^{gp})$ holds and $<\mathcal{V}_n : n \in \omega>$ be a sequence of open covers of $X$. From covers $\mathcal{V}_n$, $n \in \omega$, we define new covers $\mathcal{W}_n \in \Omega$ in the following way :
\begin{center}
$n =1$ : $\mathcal{W}_1 = \mathcal{V}_1$; \\
$n > 1$ : $\mathcal{W}_n = \mathcal{V}_1 \wedge \mathcal{V}_2 \wedge ... \wedge \mathcal{V}_n$. 
\end{center}
For each element of $\mathcal{W}_n$ choose a representation of the form $V_{1,m_1} \cap V_{2,m_2} \cap ... \cap V_{n,m_n}$ with $m_i \in \omega$.
Apply (2) to the sequence $<\mathcal{W}_n : n \in \omega>$ to find for each $n$, a finite set $\mathcal{A}_n \subset \mathcal{W}_n$ such that $\{St(\bigcup \mathcal{A}_n, \mathcal{W}_n) : n \in \omega\} \in \mathcal{I}-\mathcal{O}^{gp}$. Thus we can choose finite, pairwise disjoint sets $\mathcal{H}_n$, $n \in \omega$, such that $\{St(\bigcup \mathcal{A}_n, \mathcal{W}_n) : n \in \omega\} = \bigcup_{n \in \omega} \mathcal{H}_n$, and each $x \in X$, $\{n \in \omega : x \notin \bigcup \mathcal{H}_n\} \in \mathcal{I}$.

Now for $n$, define $A_n = \{k \in \omega : \mathcal{H}_k \subseteq \{St(\bigcup \mathcal{A}_i, \mathcal{W}_i) : i > n\}\}$. Since each $\mathcal{H}_n$ is finite and pairwise disjoint, each $A_n$ is cofinite subset of natural numbers and hence belongs to $\mathcal{F}(\mathcal{I})$(as $\mathcal{I}$ is admissible). As $S_1(\mathcal{F}(\mathcal{I}), \mathcal{F}(\mathcal{I}))$ holds, there is a sequence $<k_i :  i \in \omega>$ such that $k_i \in A_i$ for each $i$, and $\{k_1 < k_2 < ... \} \in \mathcal{F}(\mathcal{I})$. Then $\mathcal{H}_{k_1} \subseteq \{St(\bigcup \mathcal{A}_i, \mathcal{W}_i) : i > 1\}$. Let $\mathcal{G}_1$ denote the set of all $V_{1,p}$, that occurs as a first components in the chosen representations of elements of $\mathcal{A}_i$ such that $St(\bigcup \mathcal{A}_i,\mathcal{W}_i) \in \mathcal{H}_{k_1}$. Now $\mathcal{H}_{k_2} \subseteq \{St(\bigcup \mathcal{A}_i, \mathcal{W}_i) : i > 2\}$. Let $\mathcal{G}_2$ denote the set of all $V_{1,p}$, that occurs as a second components in the chosen representations of elements of $\mathcal{A}_i$ such that $St(\bigcup \mathcal{A}_i,\mathcal{W}_i) \in \mathcal{H}_{k_2}$. Continuing in this way, we obtain finite sets $\mathcal{G}_n \subset \mathcal{V}_n$.

Since $\mathcal{I}$ is an inverse invariant ideal, for $f : \omega \rightarrow \omega$ such that $f(i) = k_i$, $f(A) \in \mathcal{F}(\mathcal{I})$ implies that $A \in \mathcal{F}(\mathcal{I})$. For each $x \in X$, $\{n \in \omega : x \in St(\bigcup \mathcal{G}_n,\mathcal{V}_n\} \supseteq \{n \in \omega : x \in \bigcup \mathcal{H}_{k_n}\}$ and $f(\{n \in \omega : x \in \bigcup \mathcal{H}_{k_n}\}) = \{k_n \in \omega : x \in \bigcup \mathcal{H}_{k_n}\} = \{n \in \omega : x \in \bigcup \mathcal{H}_n\} \cap \{k_n : n \in \omega\} \in \mathcal{F}(\mathcal{I})$, since $\{n \in \omega : x \in \bigcup \mathcal{H}_n\}, \{k_n : n \in \omega\} \in \mathcal{F}(\mathcal{I})$. So $\{n \in \omega : x \in St(\bigcup \mathcal{G}_n,\mathcal{V}_n)\}  \in \mathcal{F}(\mathcal{I})$. 

Then the sequence $<\mathcal{G}_n : n \in \omega>$ witnesses for $<\mathcal{V}_n : n \in \omega>$ the $S\mathcal{I}H$ property of $X$.
\end{proof}

From the above theorem, it is natural to ask :
\begin{question}
Is it true that $S^\star_{fin}(\mathcal{O},\mathcal{I}-\Gamma) = S^\star_{fin}(\mathcal{O},\mathcal{I}-\mathcal{O}^{gp})$ ?
\end{question}

Now consider a space $X$ satisfies the following condition closely related to the $S\mathcal{I}H$ property :

$S\mathcal{I}H_{\leq n}$ : For each sequence $<\mathcal{U}_n : n \in \omega>$ of open covers of $X$, there is a sequence $<\mathcal{V}_n : n \in \omega>$ such that for each $n$, $\mathcal{V}_n$ is a finite subset of $\mathcal{U}_n$ having atmost $n$ elements and $x \in X$, $\{n \in \omega : x \notin St(\bigcup \mathcal{V}_n, \mathcal{U}_n)\} \in \mathcal{I}$.

The answer to this question is given by the next theorem. For it we need the following definition.

\begin{definition} 
Let $\mathcal{A}$ and $\mathcal{B}$ be families of subsets of the infinite set $S$. Then $CDR^\star_{sub}(\mathcal{A},\mathcal{B})$ denotes the statement that for each sequence $<A_n : n \in \omega>$ of elements of $\mathcal{A}$ there is a sequence $<B_n : n \in \omega>$ such that for each $n$, $B_n \subseteq A_n$, for $m \neq n$, $\{St(B,A_m) : B \in B_m\} \cap \{St(B,A_n) : B \in B_n\} = \emptyset$, and each $B_n$ is a member of $\mathcal{B}$.
\end{definition}

\begin{theorem} \label{38}
If a space $X$ satisfies $S\mathcal{I}H_{\leq n}$ and $CDR^\star_{sub}(\mathcal{O},\mathcal{O})$, then $X$ satisfies $S^\star_1(\mathcal{O}, \mathcal{I}-\mathcal{O}^{gp})$.
\end{theorem}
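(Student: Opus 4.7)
The plan is to mimic the block-construction used in Theorem \ref{39}, with two adjustments: replace the property on finite sets of points by the property $S\mathcal{I}H_{\leq n}$ on finite subfamilies of open covers, and use $CDR^\star_{sub}(\mathcal{O},\mathcal{O})$ in place of $CDRF^\star_{sub}(\mathcal{O},\mathcal{O})$ to ensure the pairwise-disjointness that is needed for $\mathcal{I}$-groupability.

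Let $\langle \mathcal{W}_n : n \in \omega\rangle$ be a sequence of open covers of $X$. First I would invoke $CDR^\star_{sub}(\mathcal{O},\mathcal{O})$ on $\langle \mathcal{W}_n\rangle$ to obtain open covers $\mathcal{U}_n \subseteq \mathcal{W}_n$ such that for $m \neq n$ the families $\{St(U,\mathcal{W}_m) : U \in \mathcal{U}_m\}$ and $\{St(U,\mathcal{W}_n) : U \in \mathcal{U}_n\}$ are disjoint as collections of sets. Next I group the indices into blocks $B_k = \{i : (k-1)k/2 < i \leq k(k+1)/2\}$ (so $|B_k|=k$), and form the wedge $\mathcal{V}_k = \bigwedge_{i \in B_k} \mathcal{U}_i$, which is again an open cover of $X$; each element of $\mathcal{V}_k$ has a canonical representation $\bigcap_{\ell=1}^{k} U_{\ell}$ with $U_{\ell} \in \mathcal{U}_{i_{\ell}}$ (where $B_k = \{i_1 < \cdots < i_k\}$). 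Fix such representations once and for all.

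Now apply $S\mathcal{I}H_{\leq n}$ to the sequence $\langle \mathcal{V}_k : k \in \omega\rangle$ to obtain, for each $k$, a finite $\mathcal{A}_k \subseteq \mathcal{V}_k$ with $|\mathcal{A}_k| \leq k$ and such that for every $x \in X$,
\[
\{k \in \omega : x \notin St(\bigcup \mathcal{A}_k, \mathcal{V}_k)\} \in \mathcal{I}.
\]
Enumerate $\mathcal{A}_k = \{A_1^k,\dots, A_{m_k}^k\}$ with $m_k \leq k$, and write $A_j^k = \bigcap_{\ell=1}^k U_{\ell}^{k,j}$ in the chosen representation. The key selection is: for each $\ell \in \{1,\dots,k\}$, set
\[
T_{i_\ell} \;=\; \begin{cases} U_{\ell}^{k,\ell}, & \ell \leq m_k,\\ \text{any element of } \mathcal{U}_{i_\ell}, & \ell > m_k,\end{cases}
\]
so $T_{i} \in \mathcal{U}_{i} \subseteq \mathcal{W}_{i}$ for every $i \in \omega$. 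This matches the diagonal position $\ell$ in the block with the $\ell$-th element of $\mathcal{A}_k$; this matching is why $|\mathcal{A}_k|\leq k = |B_k|$ matters.

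To finish, I claim $\{St(T_i,\mathcal{W}_i) : i \in \omega\}$ is $\mathcal{I}$-groupable, witnessed by the partition into blocks. Fix $x \in X$ and $k$ with $x \in St(\bigcup \mathcal{A}_k,\mathcal{V}_k)$; then $x \in V$ for some $V = \bigcap V_\ell \in \mathcal{V}_k$ with $V \cap A_j^k \neq \emptyset$ for some $j \leq m_k$. Restricting to the $j$-th coordinate, $V_j \cap U_j^{k,j} = V_j \cap T_{i_j} \neq \emptyset$ and $V_j \in \mathcal{U}_{i_j} \subseteq \mathcal{W}_{i_j}$, so $x \in V \subseteq V_j \subseteq St(T_{i_j},\mathcal{W}_{i_j})$, i.e.\ $x$ belongs to $\bigcup_{i \in B_k} St(T_i,\mathcal{W}_i)$. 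Hence $\{k : x \notin \bigcup_{i \in B_k} St(T_i,\mathcal{W}_i)\} \subseteq \{k : x \notin St(\bigcup \mathcal{A}_k,\mathcal{V}_k)\} \in \mathcal{I}$. The $CDR^\star_{sub}$-property ensures that no $St(T_i,\mathcal{W}_i)$ coincides with any $St(T_j,\mathcal{W}_j)$ for $i \neq j$, so the block partition yields finite pairwise disjoint subfamilies as required. The main obstacle is precisely this diagonal bookkeeping in the block: one must be sure that the $\ell$-th $\mathcal{U}_{i_\ell}$-coordinate of the $\ell$-th element of $\mathcal{A}_k$ (rather than some fixed but unmatched choice) is the one selected, so that the containment $V \subseteq V_j \subseteq St(T_{i_j},\mathcal{W}_{i_j})$ goes through for the correct index $i_j$ lying in block $B_k$.
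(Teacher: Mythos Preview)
Your proof is correct and follows essentially the same route as the paper's: apply $CDR^\star_{sub}$ first, form the triangular-block wedges $\mathcal{V}_k=\bigwedge_{i\in B_k}\mathcal{U}_i$, apply $S\mathcal{I}H_{\leq n}$ to the sequence $\langle\mathcal{V}_k\rangle$, and then diagonally extract from the $k$ chosen elements of $\mathcal{V}_k$ one $\mathcal{U}_i$-coordinate for each $i\in B_k$. Your write-up is in fact more careful than the paper's on two points: you explicitly handle the case $|\mathcal{A}_k|<k$ by padding, and you spell out the key containment $x\in V\subseteq V_j\subseteq St(T_{i_j},\mathcal{W}_{i_j})$ that justifies the inclusion $\{k:x\notin\bigcup_{i\in B_k}St(T_i,\mathcal{W}_i)\}\subseteq\{k:x\notin St(\bigcup\mathcal{A}_k,\mathcal{V}_k)\}$, which the paper only states.
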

\begin{proof}
Let $<\mathcal{G}_n : n \in \omega>$ be a sequence of open covers of $X$. Since $X$ satisfies $CDR^\star_{sub}(\mathcal{O},\mathcal{O})$, there is a sequence $<\mathcal{U}_n : n \in \omega>$ such that for each $n$, $\mathcal{U}_n \subseteq \mathcal{G}_n$, for $m \neq n$, $\{St(B,\mathcal{G}_m) : B \in \mathcal{U}_m\} \cap \{St(B,\mathcal{G}_n) : B \in \mathcal{U}_n\} = \emptyset$, and each $\mathcal{U}_n$ is an open cover of $X$. For each $n$, define $<\mathcal{V}_n : n \in \omega>$ an open cover of $X$ by putting $\mathcal{V}_n = \bigwedge \{\mathcal{U}_i : (n-1)n/2 < i \leq n(n+1)/2\}$. By applying $S\mathcal{I}H_{\leq n}$ to the sequence $<\mathcal{V}_n : n \in \omega>$, we get a sequence $<\mathcal{W}_n : n \in \omega>$ such that for each $n$, $|\mathcal{W}_n| \leq n, \mathcal{W}_n \subseteq \mathcal{V}_n$ and $\{St(\bigcup \mathcal{W}_n, \mathcal{V}_n) : n \in \omega\}$ is an $\mathcal{I}_\gamma$-cover of $X$. Now write $\mathcal{W}_n = \{W_i : (n-1)n/2 < i \leq n(n+1)/2\}$. For each $W_i$ take also the set $U_i \in \mathcal{U}_i$ which is a term in the representation of $W_i$ given above. Now to prove that the set $\{St(U_n, \mathcal{G}_n) : n \in \omega\}$ is an open $\mathcal{I}$-groupable cover of $X$. Define a sequence $n_1 < n_2 <...< n_k <...$ of natural numbers by $n_k = k(k-1)/2$. Then, for each $x \in X$, $\{ k \in \omega : x \notin \bigcup_{n_k < i \leq n_{k+1}} St(W_i, \mathcal{G}_i)\} \subseteq \{k \in \omega : x \notin St(\bigcup \mathcal{W}_k, \mathcal{V}_k)\} \in \mathcal{I}$.
\end{proof}


Now recall that a space $X$ is zero-dimensional if it has a base consisting of clopen sets. 

It is known that in a paracompact Hausdorff space, $\mathcal{I}H$ property and $S\mathcal{I}H$ property are equivalent. By taking stronger conditions than paracompact we drop Hausdorffness.

\begin{theorem}
In a zero-dimensional Lindel$\ddot{o}$f space $X$, $X$ has the $\mathcal{I}H$ property if and only if it has the $S\mathcal{I}H$ property in $X$.
\end{theorem}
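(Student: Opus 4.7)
The forward implication $\mathcal{I}H\Rightarrow S\mathcal{I}H$ is immediate: given a witnessing sequence $\langle\mathcal{V}_n\rangle$ for $\mathcal{I}H$, we have $\bigcup\mathcal{V}_n\subseteq St(\bigcup\mathcal{V}_n,\mathcal{U}_n)$, so
\[
\{n\in\omega:x\notin St(\textstyle\bigcup\mathcal{V}_n,\mathcal{U}_n)\}\subseteq\{n\in\omega:x\notin\textstyle\bigcup\mathcal{V}_n\}\in\mathcal{I},
\]
and the left-hand set lies in $\mathcal{I}$ by the hereditary property. The content of the theorem is the reverse implication, and the plan is to exploit zero-dimensionality to make the star operation collapse.

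The key preliminary step is: for every open cover $\mathcal{U}$ of $X$ there exists a countable pairwise disjoint clopen refinement $\mathcal{U}^\ast$. First, using the clopen base provided by zero-dimensionality, refine $\mathcal{U}$ by a clopen cover; then apply the Lindelöf property to extract a countable subcover $\{B_k:k\in\omega\}$; finally set $D_k=B_k\setminus\bigcup_{j<k}B_j$, which is clopen because it is a finite boolean combination of clopen sets, and $\{D_k:k\in\omega\}\setminus\{\emptyset\}$ is the desired pairwise disjoint clopen refinement.

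Now, given a sequence $\langle\mathcal{U}_n:n\in\omega\rangle$ of open covers witnessing the need for $\mathcal{I}H$, replace each $\mathcal{U}_n$ by such a refinement $\mathcal{U}_n^\ast=\{U_{n,k}:k\in\omega\}$. The crucial observation is that when $\mathcal{U}_n^\ast$ is pairwise disjoint, for any subfamily $\mathcal{V}^\ast\subseteq\mathcal{U}_n^\ast$,
\[
St(\textstyle\bigcup\mathcal{V}^\ast,\mathcal{U}_n^\ast)=\bigcup\mathcal{V}^\ast,
\]
because the only element of $\mathcal{U}_n^\ast$ meeting a given $V\in\mathcal{V}^\ast$ is $V$ itself. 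Apply the $S\mathcal{I}H$ property to $\langle\mathcal{U}_n^\ast\rangle$ to obtain finite $\mathcal{V}_n^\ast\subseteq\mathcal{U}_n^\ast$ such that $\{n:x\notin St(\bigcup\mathcal{V}_n^\ast,\mathcal{U}_n^\ast)\}=\{n:x\notin\bigcup\mathcal{V}_n^\ast\}\in\mathcal{I}$ for every $x\in X$.

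Finally, lift back to $\mathcal{U}_n$: since $\mathcal{U}_n^\ast$ refines $\mathcal{U}_n$, for each $V\in\mathcal{V}_n^\ast$ pick $W_V\in\mathcal{U}_n$ with $V\subseteq W_V$, and set $\mathcal{V}_n=\{W_V:V\in\mathcal{V}_n^\ast\}$, a finite subfamily of $\mathcal{U}_n$. Then $\bigcup\mathcal{V}_n^\ast\subseteq\bigcup\mathcal{V}_n$, so
\[
\{n\in\omega:x\notin\textstyle\bigcup\mathcal{V}_n\}\subseteq\{n\in\omega:x\notin\textstyle\bigcup\mathcal{V}_n^\ast\}\in\mathcal{I},
\]
giving the $\mathcal{I}H$ witness. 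The only subtle point, and hence the main obstacle, is the disjointification lemma above; everything else is a direct bookkeeping argument once the star operation has been neutralized.
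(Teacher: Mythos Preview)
Your proof is correct and follows essentially the same route as the paper: both directions are handled identically, with the reverse implication obtained by refining each $\mathcal{U}_n$ to a countable clopen cover via zero-dimensionality and Lindel\"of, disjointifying by $D_k=B_k\setminus\bigcup_{j<k}B_j$, observing that the star operation collapses to a union on a pairwise disjoint cover, applying $S\mathcal{I}H$, and lifting the resulting finite families back to $\mathcal{U}_n$. Your exposition of the disjointification step is in fact slightly more careful than the paper's, but the argument is the same.
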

\begin{proof}
Suppose that $X$ is $\mathcal{I}H$ space. Let $<\mathcal{U}_n : n \in \omega>$ be a sequence of open covers of $X$. Since $X$ is $\mathcal{I}H$ space, there is a sequence $<\mathcal{W}_n : n \in \omega>$ such that for each $n$, $\mathcal{W}_n$ is a finite subset of $\mathcal{U}_n$ and for each $y \in X$, $\{n \in \omega : y \notin \bigcup \mathcal{W}_n\} \in \mathcal{I}$. Then by $\bigcup \mathcal{W}_n \subseteq St(\bigcup \mathcal{W}_n, \mathcal{U}_n)$, $X$ is $S\mathcal{I}H$ space. 

Now suppose that $X$ has $S\mathcal{I}H$ property. Let $<\mathcal{U}_n : n \in \omega>$ be a sequence of open covers of $X$. Since $X$ is zero dimensional, there is basis $\mathcal{B}$ consisting of clopen sets. Then replace each $U \in \mathcal{U}_n$ by $B \in \mathcal{B}$ such that $B \subseteq U$. Since $X$ is Lindel$\ddot{o}$f space, $\mathcal{U}_n$ is countable clopen cover for each $n$. Now enumerate $\mathcal{U}_n = \{B^n_1,B^n_2,...\}$ for each $n$. Consider $\mathcal{U}^{'}_n = \{B^n_1,B^n_2 \setminus B^n_1,...\}$ for each $n$. Then $\mathcal{U}^{'}_n$ is disjoint clopen cover of $X$ for each $n$. Now apply $S\mathcal{I}H$ property of $X$ to $<\mathcal{U}^{'}_n : n \in \omega>$, there is a sequence $<\mathcal{V}_n : n \in \omega>$ such that for each $n$, $\mathcal{V}_n$ is a finite subset of $\mathcal{U}^{'}_n$ and for each $x \in X$, $\{n \in \omega : x \notin St(\bigcup \mathcal{V}_n,\mathcal{U}^{'}_n)\} \in \mathcal{I}$. Since $\mathcal{U}^{'}_n$ is disjoint, then $St(\bigcup \mathcal{V}_n,\mathcal{U}^{'}_n) = \bigcup \mathcal{V}_n$. Then $\{k \in \omega : x \notin \bigcup \mathcal{V}_k\} \in \mathcal{I}$. For each $n$ and each $V \in \mathcal{V}_n$ there is $U \in \mathcal{U}_n$ such that $V \subseteq U$. Let $\mathcal{G}_n$ be collection of these sets for each $n$. Then $\mathcal{G}_n$ is finite subset of $\mathcal{U}_n$ for each $n$ and $St(\bigcup \mathcal{V}_n,\mathcal{U}^{'}_n) = \bigcup \mathcal{V}_n \subseteq \bigcup \mathcal{G}_n$. Hence $<\mathcal{G}_n : n \in \omega>$ is the required sequence.
\end{proof}

\begin{corollary}
In a zero-dimensional Lindel$\ddot{o}$f space $X$, $X$ has the star-Hurewicz property if and only if it has the Hurewicz property.
\end{corollary}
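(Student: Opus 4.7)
The plan is to derive this corollary as an immediate specialization of the preceding theorem by choosing $\mathcal{I}$ to be the ideal $\mathcal{I}_{fin}$ of all finite subsets of $\omega$. First I would check that with $\mathcal{I} = \mathcal{I}_{fin}$, the condition $\{n \in \omega : x \notin \bigcup \mathcal{V}_n\} \in \mathcal{I}$ in the definition of $\mathcal{I}H$ translates to ``$x$ belongs to $\bigcup \mathcal{V}_n$ for all but finitely many $n$'', which is exactly the defining condition of the Hurewicz property (Definition 2.1). In the same way, $\{n \in \omega : x \notin St(\bigcup \mathcal{V}_n, \mathcal{U}_n)\} \in \mathcal{I}_{fin}$ recovers the star-Hurewicz condition (Definition 2.2). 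So the two properties $\mathcal{I}_{fin}H$ and $S\mathcal{I}_{fin}H$ coincide with $H$ and $SH$, respectively.

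Next I would confirm that $\mathcal{I}_{fin}$ is a proper admissible ideal on $\omega$, which it clearly is, so it is a legitimate choice for the preceding theorem. The hypotheses of that theorem on $X$ (zero-dimensional and Lindel\"of) are exactly the ones assumed in the corollary, and no additional assumption on $\mathcal{I}$ (such as being inverse invariant or supporting $S_1(\mathcal{F}(\mathcal{I}),\mathcal{F}(\mathcal{I}))$) appears in the statement of that theorem, so nothing extra needs to be verified.

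With these identifications in place the corollary is nothing more than a direct restatement of the theorem: the direction $H \Rightarrow SH$ follows by the trivial inclusion $\bigcup \mathcal{W}_n \subseteq St(\bigcup \mathcal{W}_n, \mathcal{U}_n)$ (as in the first half of the theorem's proof), and the converse uses the zero-dimensional and Lindel\"of assumptions to refine each $\mathcal{U}_n$ to a disjoint clopen cover $\mathcal{U}_n'$, for which $St(\bigcup \mathcal{V}_n, \mathcal{U}_n') = \bigcup \mathcal{V}_n$, reducing $SH$ back to $H$. I do not anticipate any real obstacle here; the only thing to watch out for is making it explicit that $\mathcal{I}_{fin}$ is admissible so that the theorem is genuinely applicable, and that both property-reductions are stated so the reader sees why the corollary follows without further work.
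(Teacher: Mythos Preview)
Your proposal is correct and matches the paper's intent: the corollary is stated without proof immediately after the theorem, and the intended argument is exactly the specialization $\mathcal{I}=\mathcal{I}_{fin}$ you describe, under which $\mathcal{I}H$ and $S\mathcal{I}H$ reduce to $H$ and $SH$ respectively.
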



\begin{thebibliography}{11}

\bibitem{H34} B. Farkas and L. Soukup, \textit{More on cardinal invariants of analytic $P$-ideals}, Comment. Math. Univ. Carolin., \textbf{50}(2) (2009), 281-295.


\bibitem{DT} D. A. Rose and T. R. Hamlet, \textit{Ideally equivalent topologies}, Math. Chronicle, \textbf{20} (1991), 149-156.

\bibitem{JH} D. Jankovi$\acute{c}$ and T. Hamlett, \textit{New topologies from old via ideals}, Amer. Math. Monthly, \textbf{97} (1990), 295-310.

\bibitem{H22} E. K. van Douwen, G.M. Reed, A.W. Roscoe and I.J. Tree, \textit{Star covering properties}, Topology Appl., \textbf{39} (1991), 71-103.

\bibitem{J} J. Dontchev, \textit{Ideal resolvability}, Topology Appl., \textbf{93} (1999), 1-16.

\bibitem{H36} J. E. Baumgartner and A. D. Taylor, \textit{Partition theorems and ultrafilters}, Trans. Amer. Math. Soc., \textbf{241} (1978), 283-309.


\bibitem{H27} J. Gerlits and Zs. Nagy, \textit{Some properties of $C(X), I$}, Topology Appl., \textbf{14} (1982),  151-161.

\bibitem{K} K. Kuratowski, \textit{Topology}, Vol. I, Academic Press, New York, 1966.

\bibitem{H47} K. Menger, \textit{Einige Überdeckungssätze der punktmengenlehre}, Sitzungsberischte Abt. 2a, Mathematik, Astronomie, Physik, Meteorologie und Mechanik (Wiener
Akademie, Wien) \textbf{133} (1924), 421-444.

 


\bibitem{H42} L. Babinkostova, Lj.D.R. Ko$\check{c}$inac and M. Scheepers, \textit{Combinatorics of open covers (VIII)},  Topology Appl.,\textbf{140} (2004), 15-32.

\bibitem{H6} Lj.D.R. Ko$\check{c}$inac, \textit{On Mildly Hurewicz Spaces}, Int. Math. Forum., \textbf{11}(12) (2016), 573-582.

\bibitem{H7} Lj.D.R. Ko$\check{c}$inac, \textit{The Pixley-Roy topology and selection principles}, Quest. Ans. Gen. Top., \textbf{19} (2001), 219-225.

\bibitem{H25} Lj. D. Ko$\check{c}$inac, \textit{Star-Menger and related spaces}, Publ. Math. Debrecen, \textbf{55} (1999),  421-431.

\bibitem{H28} Lj. D.R. Ko$\check{c}$inac and M. Scheepers,, \textit{Combinatorics of open covers (VII): Groupability}, Fund. Math., \textbf{179} (2003),  131-155 .

\bibitem{H21} M. Bonanzinga, \textit{Star-Lindel$\ddot{o}$f and absolutely star-Lindel$\ddot{o}$f spaces}, Quest. Ans. Gen. Top., \textbf{16} (1998), 79-104.

\bibitem{H13} M. Bonanzinga, F. Cammaroto and Lj.D.R. Ko$\check{c}$inac, \textit{Star-Hurewicz and related properties}, Appl. Gen. Topol., \textbf{5} (2004), 79-89.

\bibitem{H31} M. Bonanzinga and M. V. Matveev, \textit{Some covering properties for $\psi$-spaces}, Mat. Vesnik, \textbf{61} (2009), 3-11.


\bibitem{H8} M. Sakai, \textit{The weak Hurewicz property of Pixley-Roy hyperspaces}, Topology Appl., \textbf{160} (2013), 2531-2537.

\bibitem{H1} M. Scheepers, \textit{Combinatorics of open covers (I) : Ramsey theory},  Topology Appl.,\textbf{69} (1996), 31-62.

\bibitem{H35} M. Staniszewski, \textit{On ideal equal convergence II}, J. Math. Anal. Appl., (2017).

\bibitem{H23} M. V. Matveev, \textit{On properties similar to countable compactness and pseudocompactness}, Vestnik MGU, Ser. Mat. Mekh., \textbf{2} (1984), 24-27.

\bibitem{H24} M. V. Matveev, \textit{A survey on star covering properties}, Topology Atlas, \textbf{330} (1998).

\bibitem{H32} M. V. Matveev, \textit{On the extent of $SSM$ spaces}, preprint, July 1998.

\bibitem{H16} P. Das, D. Chandra and U. Samanta, \textit{On certain variations of $\mathcal{I}$-Hurewicz property}, Topology Appl., \textbf{241} (2018),363-376.

\bibitem{H40} P. Das, \textit{Certain types of open covers and selection principles using ideals}, Houston J. Math., \textbf{39}(2) (2013),637-650.

\bibitem{H41} P. Das, U. Samanta and D. Chandra, \textit{On certain generalized versions of groupability}, Topology Appl., \textbf{258} (2019),47-57.

\bibitem{H39} P. Das, Lj.D.R. Ko$\check{c}$inac and D. Chandra, \textit{Some remarks on open covers and selection principles using ideals}, Topology Appl., \textbf{202} (2016),183-193.

\bibitem{H10} R. Engelking, \textit{General Topology, Revised and completed edition}, Heldermann Verlag Berlin (1989).

\bibitem{H11} S. Mr$\acute{o}$wka, \textit{On completely regular spaces}, Fund. Math., \textbf{41} (1954), 105-106.


\bibitem{H4} W. Hurewicz, \textit{$\ddot{U}$ber eine verallgemeinerung des Borelschen Theorems}, Math. Z. \textbf{24} (1925), 401-421.

\bibitem{H5} W. Hurewicz, \textit{$\ddot{U}$ber Folgen stetiger Funktionen}, Fund. Math., \textbf{9} (1927), 193-204.

\bibitem{H2} W. Just, A.W. Miller, M. Scheepers and P.J. Szeptycki, \textit{The combinatorics of open covers (II)}, Topology Appl., \textbf{73} (1996), 241-266.

\bibitem{H14} Y. K. Song, \textit{Remarks on strongly star-Hurewicz spaces}, Filomat, \textbf{27}(6) (2013), 1127-1131.

\bibitem{H15} Y. K. Song, \textit{Remarks on star-$C$-Menger spaces}, Quaest. Math., \textbf{39} (2016), 479-486.

\bibitem{H9} Y. K. Song and R. Li, \textit{On almost Hurewicz spaces}, Quest. Ans. Gen. Top., \textbf{31} (2013), 131-136.

\bibitem{H33} Y. K. Song and R. Li, \textit{A note on star-Hurewicz spaces}, Filomat, \textbf{27}(6) (2013), 1091-1095.



































\end{thebibliography}
\end{document}